\newtheorem{theorem}{Theorem}[section]
\newtheorem{lemma}[theorem]{Lemma}
\newtheorem{proposition}{Proposition}
\theoremstyle{definition}
\newtheorem{definition}[theorem]{Definition}
\newtheorem{remark}{Remark}
\newcommand{\e}{\varepsilon}
\newcommand{\argmin}[1]{\underset{#1}{\operatorname{argmin}}\;}
\title[Perturbed minimizing movements] 
      {Perturbed minimizing movements\\ of families of functionals}
\author[Andrea Braides and Antonio Tribuzio]{}
\subjclass{Primary:  47J30, 35K90, 49J45; Secondary: 47J35, 35B27.}
 \keywords{Gradient flows, variational evolution, $\Gamma$-convergence, homogenization, perturbations}
 \email{braides@mat.uniroma2.it}
 \email{tribuzio@mat.uniroma2.it}
\begin{document}
\maketitle

\centerline{\scshape Andrea Braides and Antonio Tribuzio}
\medskip
{\footnotesize
 \centerline{Department of Mathematics,
 University of Rome Tor Vergata}
   \centerline{via della Ricerca Scientifica, 00133 Rome, Italy}
}

\bigskip


\bigskip
 \centerline{\it Dedicated to Alexander Mielke on the occasion of his 60th birthday}

\begin{abstract}
We consider the well-known minimizing-movement approach to the 
definition of a solution of gradient-flow type equations by means of an implicit Euler scheme 
depending on an energy and a dissipation term. We perturb the energy by considering a 
($\Gamma$-converging) sequence and the dissipation by varying multiplicative terms.
The scheme depends on two small parameters $\e$ and $\tau$, governing energy and time scales, respectively.
We characterize the extreme cases when $\e/\tau$ and $\tau/\e$ converges to $0$ sufficiently fast,
and exhibit a sufficient condition that guarantees that the limit is indeed independent of $\e$ and $\tau$.
We give examples showing that this in general is not the case, and apply this approach to study some 
discrete approximations, the homogenization of wiggly energies and geometric crystalline flows obtained as limits of ferromagnetic energies.
\end{abstract}

\section{Introduction}
In the present paper we offer a contribution to the general problem of understanding the interaction between energy and dissipation terms in variational approaches to gradient-flow type evolutions from the standpoint of minimizing movements (see also e.g.~\cite{donfremie,fle,flesav} for related work).

Implicit Euler schemes are a well-established tool to prove existence and approximation for evolution equations 
with a gradient-flow structure. We follow De Giorgi's formalization \cite{deg}, which has allowed to use  such schemes as a basis for the definition and study of
gradient flows in metric spaces \cite{ambgigsav}. 
Given an initial datum $u^0$ and a functional $\phi$ defined on a metric space $(S,d)$, 
for fixed $\tau>0$ we denote by $\{u^\tau_n\}$ a discrete orbit satisfying $u^\tau_0=u^0$ and such that
$u^\tau_n$ be a minimizer of
\begin{equation}
	u\mapsto \phi(u)+\frac{d^2(u,u_{n-1}^{\tau})}{2\tau}.
\end{equation}
Any limit of a subsequence of the piecewise-constant interpolations $u^\tau(t)=u^\tau_{\lceil t/\tau\rceil}$ is called a {\em minimizing movement for $\phi$}. 
Such a limit exists under very mild conditions on $\phi$, and, under proper differentiability assumptions on $\phi$, is a curve of maximal slope for $\phi$, which is a generalization of the definition of a solution of the gradient-flow equation
\begin{equation}
	u'=-\nabla \phi(u)
\end{equation}
(see \cite{ambgigsav} Chapter 2).

In this paper we perturb the scheme above both considering a family of energies $\phi_\e$ depending on an additional parameter $\e$ in place of a single $\phi$, and a perturbation  by varying multiplicative coefficients $\{a_n^\tau\}$ of the squared-distance term (dissipation).
In this case the discrete orbits depend on $\e$ and $\tau$ and are defined by successive minimization requiring that $u^{\tau,\e}_n$ be a minimizer of
\begin{equation}\label{mingen}
	u\mapsto \phi_\e(u)+a_n^\tau\frac{d^2(u,u_{n-1}^{\tau,\varepsilon})}{2\tau}.
\end{equation}
By letting $\e$ and $\tau$ tend to $0$ at the same time, we then define the {\em$\{a^\tau\}$-perturbed minimizing movements along $\big\{\phi_\varepsilon\big\}$ at scale $\tau$} as all possible limits of subsequences of the corresponding piecewise-constant interpolations $u^{\tau,\e}(t)=u^{\tau,\e}_{\lceil t/\tau\rceil}$. 

In the case of a single function $\phi_\e=\phi$, the resulting {\em$\{a^\tau\}$-perturbed minimizing movements for $\phi$} have been analyzed in \cite{tri} showing on the one hand that, under proper local-summability assumptions on $\{1/a^\tau_n\}_n$, the resulting minimizing movements are a perturbed curve of maximal slope with rate $a^*$, which again extends the notion of a solution of the gradient-flow equation
\begin{equation}
	a^* u'=-\nabla \phi(u).
\end{equation}
Here, $1/a^*$ is a weak limit of the piecewise-constant interpolations of $\{1/a^\tau_n\}_n$. On the other hand, if the local-summability assumptions on $\{1/a^\tau_n\}_n$ fail, the resulting minimizing movement may result discontinuous and may be used to explore different energy wells. 

When varying energies $\phi_\e$ but no perturbation are considered (i.e, $a_n^\tau=1$ for all $\tau$ and $n$), the scheme above has been analyzed in \cite{bra2,bracolgobsol}, showing that in general the resulting {\em minimizing movements along $\big\{\phi_\varepsilon\big\}$ at scale $\tau$} do depend on how $\tau$ and $\e$ tend to $0$, even if we assume that $\phi_\e$ $\Gamma$-converge to some limit $\phi$ (which is not restrictive by a compactness argument). If equi-coerciveness assumptions on $\phi_\e$ hold, then diagonal arguments show that 
we may identify the limit motions in `fast-converging $\e$-$\tau$ regimes'. More precisely, if $\e$ converges sufficiently fast to $0$ with respect to $\tau$ then the limit is a minimizing movement for $\phi$, while if conversely $\tau$ converges sufficiently fast to $0$ with respect to $\e$ then it is a limit of minimizing movements for $\phi_\e$ as $\e\to 0$. It follows that, varying from fast-converging $\tau$ to fast-converging $\e$ we may encounter some $\tau(\e)$ (or $\e(\tau)$), which we call $\e$-$\tau$ {\it critical regimes}, for which the minimizing movements are different from those in the two fast-converging $\e$-$\tau$ regimes, and in general are in a sense an interpolation of the two extreme cases. All regimes give the same result if some general conditions envisaged by Colombo and Gobbino are satisfied by $\{\phi_\e\}$ \cite{bracolgobsol}, which in particular hold in the `trivial' case of convex energies but are forbidden by fast-oscillating energies. These conditions can be related to the previous seminal work by Sandier and Serfaty on limits of gradient flows \cite{SS}.

In the case of $\phi_\e$ $\Gamma$-converging to some limit $\phi$, the presence of many local minima may result in  a pinning phenomenon (i.e., orbits may be trapped in energy wells). The addition of the perturbations $\{a^\tau_n\}$ has the effect of allowing for a wider exploration of local energy wells, while maintaining a fixed overall effect on the limit continuum rate $a^*$. We prove that general $\{a^\tau\}$-perturbed minimizing movements along $\big\{\phi_\varepsilon\big\}$ interpolate between the fast-converging regimes given now by $\{a^\tau\}$-perturbed minimizing movements for $\phi$ and limits of minimizing movements for $\phi_\e$ as $\e\to 0$, and that the Colombo-Gobbino conditions still provide a `commutability result'. The effect in the critical regimes are examined in three sets of examples. First, we deal with one-dimensional discretizations of the simple energy $\phi(u)=-u$, showing that different perturbations with the same $a^*$ may give different pinning effects at the microscopic level, influencing the final homogenized velocity. The second example deals with one-dimensional wiggly energies, related to gradient flows of the type
\begin{equation}
	u'=-F'\Bigl({u\over\e}\Bigr),
\end{equation}
with oscillating $F$.
A minimizing-movement-based study of such energies has been performed in \cite{ansbrazim}, showing pinning phenomena in terms of the ratio $\e/\tau$. Here, we prove a general homogenization formula for the effective velocity, and an explicit description of the effect of the perturbations $\{a^\tau_n\}$ on the pinning threshold. Finally, a third example is given of a perturbed crystalline motion derived from lattice energies of ferromagnetic type as in \cite{bragelnov}, showing the dependence of the velocity and  consequently of the pinning threshold on the values of the perturbations.

\section{Perturbed minimizing movements along a family of functionals}\label{pertMM}

Following the notation in \cite{ambgigsav}, we consider a complete metric space $(S,d)$, and a Hausdorff topology $\sigma$ on $S$, weaker than the one induced by the metric $d$ and such that $d$ is $\sigma$-lower semicontinuous.

We consider a time-discretization parameter $\tau>0$. For every $\tau$ we consider a sequence $(a_n^\tau)_{n\ge1}$, such that $a_n^\tau>0$ for every $n\ge1$. We call this family of ($\tau$-parameterized) sequences  ``perturbations'', and we will use the notation
$$a^\tau:(0,+\infty)\to(0,+\infty),\quad a^\tau(t):=a_{\lceil t/\tau\rceil}^\tau$$
to denote the corresponding piecewise-constant interpolation, where $\lceil s\rceil$ denotes the upper integer part of $s$.

For each $\e>0$ we will consider a proper functional $\phi_\varepsilon:S\to(-\infty,+\infty]$ with the corresponding domains denoted by  $D(\phi_\varepsilon)$. 

With given $\e$ and $\tau$ we consider families of sequences $\{u_n^{\tau,\e}\}_n$ satisfying
\begin{equation}
\label{problem}
	\begin{cases}
		u_0^{\tau,\varepsilon}\in D(\phi_\varepsilon)\\
		u_n^{\tau,\varepsilon}\in\argmin{u\in S}\bigg\{\phi_\varepsilon(u)+a_n^\tau\frac{d^2(u,u_{n-1}^{\tau,\varepsilon})}{2\tau}\bigg\}& \hbox{ if }n\ge1.
	\end{cases}
\end{equation}
If such a family exists then we say that it {\em solves the Euler iterated minimization scheme along the sequence of functionals $\{\phi_\varepsilon\}$ at time discretization scale $\tau$ perturbed by $\{a^\tau\}$}. This scheme is a modified formulation of the one presented in \cite{ambgigsav}; in that case $\{a^\tau\}$ take the constant value $1$ and $\phi_\varepsilon$ are all equal to a single $\phi$. If, for a fixed $\tau$ and $\varepsilon$, there exist every step $u_n^{\tau,\varepsilon}$ of scheme (\ref{problem}), then the sequence is called a {\em discrete solution} or a {\em discrete orbit} for (\ref{problem}), and is identified with the curve
	$$u^{\tau,\varepsilon}:[0,+\infty)\to S,\quad u^{\tau,\varepsilon}(t):=u_{\lceil t/\tau\rceil}^{\tau,\varepsilon}.$$
	
We define gradient-flow type motions as the limits of $u^{\tau,\varepsilon}$ for $\e$ and $\tau$ tending to $0$.
The two parameters $\tau$ and $\varepsilon$ are thought to be related in the sense that they depend on each other, and the limit motion may depend on their relation. In order to highlight this, depending on the situation, we will write $\tau(\varepsilon)$ or $\varepsilon(\tau)$ and sometimes, we will refer to these relations with the term of ``$\tau$-$\varepsilon$ regimes''.

\begin{definition}
A curve $u:[0,+\infty)\to S$ is called a {\em$\{a^\tau\}$-perturbed minimizing movement along $\big\{\phi_\varepsilon\big\}$} if there exist two sequences $(\tau_k), (\varepsilon_k)$ both tending to $0$ as $k\to+\infty$ such that  discrete solutions $u^{\tau_k,\varepsilon_k}$ exist for every $k$ and pointwise converge to $u$ in the topology~$\sigma$.
\end{definition}

	For perturbations $\{a^\tau\}$ taking the constant value $1$, this definition is the same as that of minimizing movement along the sequence of functionals $\{\phi_\e\}$ at scale $\tau$ given in \cite{bracolgobsol}.
	For non-constant $\{a^\tau\}$ and a single functional this definition has been used in \cite{tri}. Reworking the arguments therein, which are themselves an elaboration of those in \cite{ambgigsav}, we have the properties contained in the following remark.

\begin{remark}[{\bf Assumptions for the existence of perturbed minimizing movements}]\label{epmm}
Following the case of a single (unperturbed) functional in \cite{ambgigsav}, we consider the following conditions:
\begin{enumerate}
	\item (lower semicontinuity) $\phi_\varepsilon$ are $\sigma$-lower semicontinuous for every $\varepsilon>0$
	\item (equicoerciveness) there exists $u^*\in S$ such that for all $c>0$ $$\inf_{u\in S, \varepsilon>0}\big\{\phi_\varepsilon(u)+c\, d^2(u,u^*)\big\}>-\infty$$
	\item (equicompactness) for all $c>0$ there exists a $\sigma$-compact $K_c$ such that $$\bigcup_{\varepsilon>0}\big\{u\in S\ \big\vert \ d(u,u^*)<c,\,\phi_\varepsilon(u)<c\big\}\subset K_c$$
	\item (control of initial data) there exists a constant $C_0$ such that for all $\tau,\,\varepsilon>0$, $d(u_0^{\tau,\varepsilon},u^*)\le C_0$ and $\phi_\varepsilon(u_0^{\tau,\varepsilon})\le C_0$
	\item (local uniform integrability) the family $\{1/a^\tau\}$ is uniformly integrable in $[0,T]$ for all $T>0$.
\end{enumerate}

These hypotheses imply (see \cite{tri}  Section 2 for details) that for any $T>0$ there exists a constant $C_T$ depending on the perturbations such that
\begin{equation}\label{precomp}
d(u^{\tau,\varepsilon}(t),u^*) \le C_T, \quad \phi_\varepsilon(u^{\tau,\varepsilon}(t)) \le C_0,\text{ for every }t\in[0,T];
\end{equation}
i.e., the $\sigma$-precompactness of discrete orbits, and a regularity of discrete solutions
\begin{equation}\label{regularity}
d(u^{\tau,\varepsilon}(t),u^{\tau,\varepsilon}(s)) \le c\,\theta_T(t+\tau,s),\,\hbox{ for all }t,s\in[0,T]
\end{equation}
where $c$ is a positive constant and
$$\theta_T(t,s)=\Big(\sup_{\tau>0} \int_s^t \frac{1}{a^\tau(\xi)}d\xi\Big)^{\frac{1}{2}}$$
defines a modulus of continuity. Applying a variant of the Ascoli-Arzel\'a Theorem (see Proposition 3.3.1 \cite{ambgigsav}) we obtain the existence of (at least) one perturbed minimizing movement $u\in AC_{\rm loc}(0,+\infty;S)$ as limit of a sequence $u^{\tau_k,\e_k}$.

Moreover, the increments of the discrete solutions
\begin{equation}\label{discder}
|(u^{\tau,\varepsilon})'|(t):=\frac{d(u_n^{\tau,\varepsilon},u_{n-1}^{\tau,\varepsilon})}{\tau},\text{ if }t\in((n-1)\tau,n\tau]
\end{equation}
weakly converge (up to subsequences) in $L^1_{\rm loc}(0,+\infty)$ to a function $A$, which is an upper bound for the {\em metric derivative} of $u$ (for its definition, see for instance \cite{ambgigsav} Theorem 1.1.2);  i.e.,
\begin{equation}\label{upbound}
|u'|(t):=\lim_{s\to t}\frac{d(u(t),u(s))}{|t-s|}\le A(t),\hbox{ a.e. in }(0,+\infty),
\end{equation}
and, defining (as in \cite{ambgigsav} Definition 3.2.1) for every $\tau, \varepsilon$ the {\em De Giorgi interpolants}
\begin{equation}\label{Gte}
\begin{aligned}
	&\tilde{u}^{\tau,\varepsilon}(t) \in \argmin{u\in S}\bigg\{\phi(u)+a^\tau_n\frac{d^2(u,u_{n-1}^\tau)}{2\delta}\bigg\} \\
	&G_{\tau,\varepsilon}(t) = a_n^\tau\frac{d(\tilde{u}^{\tau,\varepsilon}(t),u_{n-1}^\tau)}{\tau}
\end{aligned},\text{ if }t=(n-1)\tau+\delta
\end{equation}
we also obtain the following discrete energy estimate
\begin{equation}\label{eneest}
	\frac{1}{2}\int_0^{n\tau}a^\tau(\xi)|(u^{\tau,\varepsilon})'|^2(\xi)d\xi+\frac{1}{2}\int_0^{n\tau}\frac{1}{a^\tau(\xi)}G_{\tau,\varepsilon}^2(\xi)d\xi=\phi_\varepsilon(u_0^{\tau,\varepsilon})-\phi_\varepsilon(u_n^{\tau,\varepsilon}).
\end{equation}
for all $n\ge1$, that will bring to a convergence in energy.
\end{remark}

\begin{remark}[Curves of maximal slope with perturbed velocity]
	In \cite{ambgigsav} it is proved that minimizing movements for a single functional $\phi$ at the scale $\tau$ are curves of maximal slope with respect to $|\partial^-\phi|(u)$,  the {\em relaxed local slope} of $\phi$, which is defined as the $\sigma$-lower semicontinuous envelope of
	$$|\partial\phi|(u)=\liminf_{v\to u}\frac{\big(\phi(u)-\phi(v)\big)_+}{d(u,v)},$$
	under the assumption that $|\partial^-\phi|(u)$ be a {\em strong upper gradient} (see Definition 1.2.1 \cite{ambgigsav}). In the perturbed case, we need a generalization of the concept of curve of maximal slope for a functional in metric spaces to obtain the analogous result (Theorem 3.9 \cite{tri}) that $\{a^\tau\}$-perturbed minimizing movements for $\phi_\e$ are curves of maximal slope for some $\phi$ with a perturbed velocity.

\begin{definition}[Definition 3.2 \cite{tri}] Let $g:S\to[0,+\infty]$ be a strong upper gradient for $\phi$; that is, for any $v\in AC(a,c;S)$, $g\circ v$ is Borel and
$$|\phi(v(t))-\phi(v(s))|\le\int_s^t g(v(\xi))|v'|(\xi))d\xi,\hbox{ for any }a<s<t<b.$$
	If $\lambda:(a,b)\to(0,+\infty)$ is a measurable function, $u\in AC(a,b;S)$ is a {\em curve of maximal slope for $\phi$ with respect to a strong upper gradient $g$ of rate $\lambda$} if $\phi\circ u$ equals almost everywhere a non-increasing map (still denoted by $\phi\circ u$) and for all $a<s<t<b$
	\begin{equation}\label{cmp}
		\phi(u(t))-\phi(u(s))\le-\frac{1}{2}\int_t^s\frac{1}{\lambda(\xi)}|u'|^2(\xi)d\xi-\frac{1}{2}\int_t^s\lambda(\xi)g(u(\xi))^2d\xi.
	\end{equation}
	If $\lambda$ is constant then $u$ is a curve of maximal slope for $\phi$ with respect to $g$ according to the definition given in \cite{ambgigsav}.
\end{definition}
\end{remark}

\subsection{The conditions of Colombo-Gobbino}

As in \cite{bracolgobsol}, we prove that if the functionals converge in a strong way (conditions of Colombo-Gobbino below) we have a ``commutatibility result''.

\begin{definition}
	We say that a sequence of functionals $\big\{\phi_\varepsilon\big\}$ {\em converges to a functional $\phi$ according to the conditions of Colombo-Gobbino} if for every sequence $\varepsilon_k\to0$ and for all $v_k\xrightarrow{\sigma}v$, such that $\sup_k\big\{|\phi_{\varepsilon_k}(v_k)|,|\partial\phi_{\varepsilon_k}|(v_k)\big\}<+\infty$ we have
	\begin{equation}\label{colgob}
		\lim_k\phi_{\varepsilon_k}(v_k)=\phi(v),\quad\liminf_k|\partial\phi_{\varepsilon_k}|(v_k)\ge|\partial\phi|(v).
	\end{equation}
\end{definition}

With this condition we have the following result.

\begin{theorem}\label{CGthm}
	If assumptions from $1$ to $5$ of Remark {\rm\ref{epmm}} hold (so that there exists at least one perturbed minimizing movement), if a finite $a^*$ exists such that the functions $\{1/a^\tau\}$ weakly converge to $1/a^*$ in $L^1_{\rm loc}(0,+\infty)$ (which is always satisfied up to subsequences by assumption $5$ of Remark {\rm\ref{epmm}} ), if
	\begin{itemize}
		\item[{\rm (i)}] $\phi_\varepsilon$ converges to $\phi$ according to the conditions of Colombo-Gobbino
		\item[{\rm (ii)}] the local slope $|\partial\phi|$ is a strong upper gradient for $\phi$
	\end{itemize}
	then every $\{a^\tau\}$-perturbed minimizing movement along $\big\{\phi_\varepsilon\big\}$ of problem \eqref{problem} is a curve of maximal slope for $\phi$ with respect to $|\partial\phi|$ of rate $1/a^*$.
\end{theorem}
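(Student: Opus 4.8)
The plan is to follow the De Giorgi energy-dissipation scheme of \cite{ambgigsav,tri}, establishing a sharp energy inequality for the limit curve $u$ and then using the strong-upper-gradient hypothesis (ii) to upgrade it to an equality. Fix a perturbed minimizing movement $u$, obtained as the $\sigma$-limit of discrete orbits $u^{\tau_k,\e_k}$ with $\tau_k,\e_k\to0$, and recall from Remark~\ref{epmm} the a priori bounds \eqref{precomp}, the modulus of continuity \eqref{regularity}, the weak $L^1_{\rm loc}$-convergence $|(u^{\tau_k,\e_k})'|\rightharpoonup A$ with $A\ge|u'|$ from \eqref{upbound}, and the discrete energy identity \eqref{eneest}. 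Passing to a further (not relabelled) subsequence I would also arrange $1/a^{\tau_k}\rightharpoonup 1/a^*$ weakly in $L^1_{\rm loc}$. The goal is to take the liminf in \eqref{eneest}, written on grid intervals approximating generic times $0<s<t$, namely
\[
\tfrac12\int_s^t a^{\tau_k}|(u^{\tau_k,\e_k})'|^2\,d\xi+\tfrac12\int_s^t\tfrac{1}{a^{\tau_k}}G_{\tau_k,\e_k}^2\,d\xi=\phi_{\e_k}(u^{\tau_k,\e_k}(s))-\phi_{\e_k}(u^{\tau_k,\e_k}(t)).
\]

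For the right-hand side I would use that $n\mapsto\phi_{\e_k}(u^{\tau_k,\e_k}_n)$ is non-increasing, extract via Helly a non-increasing pointwise limit $\varphi$, and identify $\varphi(t)=\phi(u(t))$ by the first Colombo-Gobbino condition in \eqref{colgob}: indeed $u^{\tau_k,\e_k}(t)\xrightarrow{\sigma}u(t)$, while the minimality in \eqref{problem} yields the slope bound $|\partial\phi_{\e_k}|(u^{\tau_k,\e_k}_n)\le a_n^{\tau_k}d(u_n,u_{n-1})/\tau_k$, so that at a.e.\ time (selecting ``good times'' along the subsequence where these quantities stay bounded) the energies converge. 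This gives $\phi_{\e_k}(u^{\tau_k,\e_k}(s))-\phi_{\e_k}(u^{\tau_k,\e_k}(t))\to\phi(u(s))-\phi(u(t))$ for $s,t$ in a set of full measure.

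For the two terms on the left I would argue by lower semicontinuity, coping with the merely weakly convergent weights by a duality (Young) inequality. For the kinetic term, from $\tfrac12 a^{\tau}v^2\ge v\psi-\tfrac12\tfrac{1}{a^\tau}\psi^2$ with $v=|(u^{\tau_k,\e_k})'|$ and any bounded $\psi$, the weak convergences $v\rightharpoonup A$ and $1/a^{\tau_k}\rightharpoonup1/a^*$ give $\liminf_k\tfrac12\int_s^t a^{\tau_k}v^2\ge\int_s^t A\psi-\tfrac12\int_s^t\tfrac{1}{a^*}\psi^2$; optimizing over $\psi$ (formally $\psi=a^*A$) and using $A\ge|u'|$ yields $\liminf_k\tfrac12\int_s^t a^{\tau_k}|(u^{\tau_k,\e_k})'|^2\ge\tfrac12\int_s^t a^*|u'|^2$. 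For the slope term I would first note that the De Giorgi interpolants $\tilde u^{\tau_k,\e_k}(t)$ share the limit $u(t)$ (their distance to $u^{\tau_k,\e_k}_{n-1}$ is controlled by \eqref{regularity}) and that $G_{\tau_k,\e_k}\ge|\partial\phi_{\e_k}|(\tilde u^{\tau_k,\e_k})$, so that the second Colombo-Gobbino condition in \eqref{colgob} gives $\liminf_k|\partial\phi_{\e_k}|(\tilde u^{\tau_k,\e_k}(t))\ge|\partial\phi|(u(t))$ pointwise. Combining this pointwise lower bound with $1/a^{\tau_k}\rightharpoonup1/a^*$ through the analogous duality $\tfrac12\tfrac{1}{a^\tau}G^2\ge\tfrac{1}{a^\tau}G\psi-\tfrac12\tfrac{1}{a^\tau}\psi^2$ (plus a uniform-integrability/Fatou argument for the mixed term) gives $\liminf_k\tfrac12\int_s^t\tfrac{1}{a^{\tau_k}}G_{\tau_k,\e_k}^2\ge\tfrac12\int_s^t\tfrac{1}{a^*}|\partial\phi|(u)^2$. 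Collecting the three limits turns \eqref{eneest} into
\[
\phi(u(s))-\phi(u(t))\ge\tfrac12\int_s^t a^*|u'|^2\,d\xi+\tfrac12\int_s^t\tfrac{1}{a^*}|\partial\phi|(u)^2\,d\xi.
\]

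Finally I would close the loop with hypothesis (ii). Since $|\partial\phi|$ is a strong upper gradient for $\phi$ and $u\in AC_{\rm loc}$, its defining inequality together with Young's inequality (split with the weights $a^*$ and $1/a^*$) gives the opposite estimate $\phi(u(s))-\phi(u(t))\le\int_s^t|\partial\phi|(u)|u'|\le\tfrac12\int_s^t a^*|u'|^2+\tfrac12\int_s^t\tfrac{1}{a^*}|\partial\phi|(u)^2$. Hence both inequalities hold with equality; in particular the sharp inequality above is exactly \eqref{cmp} with rate $\lambda=1/a^*$ and strong upper gradient $g=|\partial\phi|$, while $\phi\circ u$ is non-increasing since the right-hand side is nonnegative, identifying $u$ as the desired curve of maximal slope. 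I expect the main obstacle to be the slope term: reconciling the only weak convergence of the weights $1/a^{\tau_k}$ with the only pointwise (liminf) lower bound furnished by Colombo-Gobbino for the interpolant slopes requires a careful joint lower-semicontinuity argument (uniform integrability of $\{1/a^{\tau_k}\}$ and selection of good times), together with the verification that the variational interpolants do not drift away from $u$.
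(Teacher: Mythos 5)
Your proposal is correct and follows essentially the same route as the paper: pass to the liminf in the discrete energy identity \eqref{eneest}, handle the two quadratic terms by joint lower semicontinuity against the weakly converging weights $a^{\tau_k}$ and $1/a^{\tau_k}$ (your duality/Young arguments are in substance the paper's Lemmas \ref{fatoul} and \ref{omol}, which you re-derive inline rather than cite), identify the energy limits and the slope lower bound via the Colombo--Gobbino conditions, and close with the strong-upper-gradient inequality to upgrade the resulting estimate to the maximal-slope equality \eqref{cmp} with rate $1/a^*$. The only substantive difference is expository, and your explicit treatment of the interval $[s,t]$, of the ``good times'' for the slope bound, and of the final reverse inequality is if anything more detailed than the paper's proof.
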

\begin{remark}
The assumption of finiteness on $a^*$ is only technical and can be avoided by a more precise definition of curve of maximal slope of given rate (as the one in \cite{tri}), that we omitted here for the sake of simplicity. Note moreover that $E=\{t\,|\,a^*(t)=+\infty\}$; that is, $1/a^\tau\rightharpoonup0$ on $E$, corresponds to the set of times in which the curve $u$ has zero velocity. 
\end{remark}
The proof of this theorem follows the one in \cite{ambgigsav} with the help of two additional technical results presented in \cite{tri} with all the details and recalled below. The first one can be proved using test functions lower than $\liminf_{\tau\to0}G_{\tau,\varepsilon}^2$ in the weak convergence of $1/a^\tau$. The second one can be obtained by slightly modifying the result of $\Gamma$-convergence of Dirichlet energy functionals on Sobolev spaces (Theorem 2.35 and Example 2.36 \cite{bra1}). 

\begin{lemma}\label{fatoul} Let $\tilde{u}^{\tau,\varepsilon}$ and $G_{\tau,\varepsilon}$ be defined as in \eqref{Gte}. Then 
	for every $t>0$ we have
	$$\liminf_{\tau,\varepsilon\to0}\int_0^{\lceil\frac{t}{\tau}\rceil\tau}\frac{1}{a^\tau(\xi)}G_{\tau,\varepsilon}^2(\xi)d\xi \ge \int_0^t\frac{1}{a^*(\xi)}\liminf_{\tau,\varepsilon\to0}G_{\tau,\varepsilon}^2(\xi)d\xi.$$
\end{lemma}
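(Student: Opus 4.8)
The plan is to combine the weak convergence $1/a^\tau\rightharpoonup1/a^*$ in $L^1_{\rm loc}(0,+\infty)$ with the pointwise definition $g(\xi):=\liminf_{\tau,\varepsilon\to0}G_{\tau,\varepsilon}^2(\xi)$; the decisive ingredient is that assumption (5) of Remark~\ref{epmm} amounts, through the Dunford--Pettis theorem, to the uniform integrability of $\{1/a^\tau\}$ on each $[0,T]$. Write $t_\tau:=\lceil t/\tau\rceil\tau$ and fix $T>t$, so that $t_\tau\to t$. Since $1/a^*\ge0$, by monotone convergence it is enough to prove, for every bounded measurable $\psi$ with $0\le\psi\le g$ that is extended by $0$ outside $[0,t]$,
\[
\liminf_{\tau,\varepsilon\to0}\int_0^{t_\tau}\frac{1}{a^\tau(\xi)}G_{\tau,\varepsilon}^2(\xi)\,d\xi\ \ge\ \int_0^t\frac{1}{a^*(\xi)}\psi(\xi)\,d\xi,
\]
and then let $\psi=\min(g,M)$ increase to $g$ as $M\to+\infty$.

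So fix such a $\psi$, with $0\le\psi\le M$, and pass to a subsequence realizing the left-hand liminf; along it the pointwise liminf of $G_{\tau,\varepsilon}^2$ is still no smaller than $g$, hence $\ge\psi$. Using the elementary identity $G_{\tau,\varepsilon}^2=\psi-(\psi-G_{\tau,\varepsilon}^2)_++(G_{\tau,\varepsilon}^2-\psi)_+$ and discarding the last nonnegative term (recall $1/a^\tau\ge0$), I obtain
\[
\int_0^{t_\tau}\frac{1}{a^\tau}G_{\tau,\varepsilon}^2\,d\xi\ \ge\ \int_0^{t_\tau}\frac{1}{a^\tau}\psi\,d\xi-\int_0^{t_\tau}\frac{1}{a^\tau}(\psi-G_{\tau,\varepsilon}^2)_+\,d\xi.
\]
Because $\psi$ vanishes outside $[0,t]$ and lies in $L^\infty$, the first integral is a pairing of the weakly convergent sequence $1/a^\tau$ against a fixed bounded test function, hence converges to $\int_0^t\frac{1}{a^*}\psi\,d\xi$.

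It remains to show that the error term tends to $0$, and this is where I expect the real difficulty to lie: the lower bound on $G_{\tau,\varepsilon}^2$ is only a pointwise liminf, whereas $1/a^\tau$ converges merely weakly, so a naive passage to the limit is not available. Set $h_{\tau,\varepsilon}:=(\psi-G_{\tau,\varepsilon}^2)_+$, which satisfies $0\le h_{\tau,\varepsilon}\le M$, is supported in $[0,t]$, and tends to $0$ almost everywhere since $\limsup_{\tau,\varepsilon\to0}(\psi-G_{\tau,\varepsilon}^2)_+=(\psi-g)_+=0$. By Egorov's theorem, given $\eta>0$ there is a set $A\subset[0,t]$ with $|[0,t]\setminus A|<\eta$ on which $h_{\tau,\varepsilon}\to0$ uniformly, and splitting the integral yields
\[
\int_0^{t_\tau}\frac{1}{a^\tau}h_{\tau,\varepsilon}\,d\xi\ \le\ \|h_{\tau,\varepsilon}\|_{L^\infty(A)}\int_0^{T}\frac{1}{a^\tau}\,d\xi+M\sup_\tau\int_{[0,t]\setminus A}\frac{1}{a^\tau}\,d\xi.
\]
The first summand vanishes in the limit because $\{1/a^\tau\}$ is bounded in $L^1(0,T)$, and the second is made arbitrarily small by uniform integrability upon choosing $\eta$ small; letting $\eta\to0$ forces the error term to $0$. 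Combining the three estimates gives the desired inequality for each $\psi$, and letting $\psi\uparrow g$ concludes the proof.
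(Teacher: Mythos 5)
Your argument is correct, and it is precisely the approach the paper has in mind: the paper does not write out a proof of Lemma \ref{fatoul} but only remarks that it ``can be proved using test functions lower than $\liminf_{\tau\to0}G_{\tau,\varepsilon}^2$ in the weak convergence of $1/a^\tau$'' (deferring details to \cite{tri}), and your truncation $\psi=\min(g,M)$, the pairing of $\psi$ with the weakly convergent $1/a^\tau$, and the Egorov-plus-equi-integrability control of the error term are exactly a complete implementation of that sketch.
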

	
Since $|\partial\phi_\varepsilon|(\tilde{u}^{\tau,\varepsilon}(t)) \le G_{\tau,\varepsilon}(t)$ (Lemma 3.1.3 \cite{ambgigsav})	Lemma \ref{fatoul} implies
\begin{equation}\label{fatou}
	\liminf_{\tau,\varepsilon\to0}\int_0^{\lceil\frac{t}{\tau}\rceil\tau}\frac{1}{a^\tau(\xi)}G_{\tau,\varepsilon}^2(\xi)d\xi \ge \int_0^t\frac{1}{a^*(\xi)}\liminf_{\tau,\varepsilon\to0}|\partial\phi_\varepsilon|^2(\tilde{u}^{\tau,\varepsilon}(\xi))d\xi.
\end{equation}

\begin{lemma}\label{omol}
	Let $(u^{\tau,\e})'$ be defined as in \eqref{discder} and $A$ as in \eqref{upbound}. Let $\tau_k,\e_k$ be sequences such that $u^{\tau_k,\e_k}, (u^{\tau,\e})'$ and $a^{\tau_k}$ converge respectively to $u, A$ and $a^*$, then exists a subsequence (not relabeled) such that for every $t>0$
	\begin{equation}\label{omo}
		\liminf_{k}\int_0^{\lceil\frac{t}{\tau_k}\rceil\tau_k} a^{\tau_k}(\xi)|(u^{\tau_k,\varepsilon_k})'|^2(\xi)d\xi \ge \int_0^t a^*(\xi)A^2(\xi)d\xi.
	\end{equation}
\end{lemma}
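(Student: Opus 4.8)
The plan is to recognize the inequality as the weak lower semicontinuity of the jointly convex, positively $1$-homogeneous integral functional $(v,b)\mapsto\int v^2/b$, evaluated at $v=|(u^{\tau,\e})'|$ and $b=1/a^\tau$, and to prove it by hand through a localization (Cauchy--Schwarz on subintervals) followed by a refinement argument. Throughout I write $v_k=|(u^{\tau_k,\e_k})'|$, which converges weakly to $A$ in $L^1_{\rm loc}(0,+\infty)$ by Remark \ref{epmm}, and I read the hypothesis ``$a^{\tau_k}\to a^*$'' as the weak convergence $1/a^{\tau_k}\rightharpoonup 1/a^*$ in $L^1_{\rm loc}$, available up to a subsequence by assumption $5$ of Remark \ref{epmm}; as in the remark following Theorem \ref{CGthm} I work under the finiteness of $a^*$, so that $1/a^*>0$ a.e. First I would reduce to the interval $[0,t]$: since the integrand is nonnegative and $\lceil t/\tau_k\rceil\tau_k\ge t$, it suffices to prove $\liminf_k\int_0^t a^{\tau_k}v_k^2\ge\int_0^t a^* A^2$.

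Next comes the localization. Fix a partition $0=s_0<s_1<\dots<s_N=t$ with subintervals $I_j=(s_{j-1},s_j)$. On each $I_j$ the Cauchy--Schwarz inequality gives
\[
\int_{I_j}v_k=\int_{I_j}\sqrt{a^{\tau_k}}\,v_k\cdot\frac{1}{\sqrt{a^{\tau_k}}}\le\Big(\int_{I_j}a^{\tau_k}v_k^2\Big)^{1/2}\Big(\int_{I_j}\frac{1}{a^{\tau_k}}\Big)^{1/2},
\]
whence $\int_{I_j}a^{\tau_k}v_k^2\ge(\int_{I_j}v_k)^2/\int_{I_j}(1/a^{\tau_k})$. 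Summing over $j$ and passing to the $\liminf$ in $k$---testing the two weak convergences against the indicator $\chi_{I_j}\in L^\infty$, so that $\int_{I_j}v_k\to\int_{I_j}A$ and $\int_{I_j}1/a^{\tau_k}\to\int_{I_j}1/a^*$ for each of the finitely many $j$---I would obtain
\[
\liminf_k\int_0^t a^{\tau_k}v_k^2\ge\sum_{j=1}^N\frac{\big(\int_{I_j}A\big)^2}{\int_{I_j}1/a^*}.
\]

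Then comes the reconstruction. Since the left-hand side does not depend on the partition, I may take the supremum of the right-hand side over all partitions, and the claim reduces to the identity $\sup_{\mathcal P}\sum_j(\int_{I_j}A)^2/\int_{I_j}(1/a^*)=\int_0^t a^* A^2$. Choosing the dyadic partitions $\mathcal P_n$ of $[0,t]$ and writing $A_n$, $c_n$ for the averages (conditional expectations) of $A$ and $1/a^*$ over the intervals of $\mathcal P_n$, a direct computation gives $\sum_j(\int_{I_j}A)^2/\int_{I_j}(1/a^*)=\int_0^t A_n^2/c_n$. By the martingale (Lebesgue-differentiation) theorem $A_n\to A$ and $c_n\to 1/a^*$ a.e., and since $(p,q)\mapsto p^2/q$ is continuous for $q>0$ (here $1/a^*>0$ a.e.), Fatou's lemma yields $\liminf_n\int_0^t A_n^2/c_n\ge\int_0^t a^* A^2$, which is the reverse inequality needed to close the argument.

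I expect the reconstruction step to be the main obstacle: it is exactly there that the joint convexity and positive $1$-homogeneity of $(p,q)\mapsto p^2/q$ enter, guaranteeing that the harmonic-mean structure of the limit coefficient $1/a^*$ is the correct effective object (this is the one-dimensional homogenization mechanism behind Example 2.36 in \cite{bra1}). Two technical points must be handled with care: the degenerate set $\{a^*=+\infty\}$, which by the remark corresponds to zero velocity and is excluded by the finiteness assumption; and the extraction of a subsequence, which is needed only to turn the weak convergences into genuine convergences aligned with the diagonal argument of Theorem \ref{CGthm}, the core inequality itself holding along the full sequence.
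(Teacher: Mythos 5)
Your argument is correct and is essentially the one the paper has in mind: the paper does not write out a proof but refers to the liminf inequality in the $\Gamma$-convergence of one-dimensional Dirichlet (homogenization) functionals (Theorem 2.35 and Example 2.36 of \cite{bra1}), whose standard proof is exactly your Cauchy--Schwarz localization on subintervals followed by the identification of the harmonic-mean coefficient through partition refinement. Your handling of the two delicate points (positivity of $1/a^*$ under the finiteness assumption on $a^*$, and reading the hypothesis as weak $L^1_{\rm loc}$ convergence of $1/a^{\tau_k}$) matches the conventions set in Theorem \ref{CGthm} and the remark following it.
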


\begin{proof}[Proof of Theorem {\rm\ref{CGthm}}]
	Taking the limit in the energy estimate (\ref{eneest}), thanks to the conditions of Colombo-Gobbino, and inequalities (\ref{fatou}) and (\ref{omo}), we have
	\begin{eqnarray*}
		\phi(u(0))&=&\lim_k\phi_{\varepsilon_k}(u_0^{\tau_k,\varepsilon_k})\\
		&\ge&\liminf_k\frac{1}{2}\int_0^{n\tau_k}a^{\tau_k}(\xi)|(u^{\tau,\varepsilon})'|^2(\xi)d\xi+\frac{1}{2}\int_0^{n\tau_k}\frac{1}{a^{\tau_k}(\xi)}G_{\tau_k,\varepsilon_k}^2(\xi)d\xi\\
		&&\quad\quad+\phi_{\varepsilon_k}(u^{\tau_k,\varepsilon_k}(t))\\
		&\ge&\frac{1}{2}\int_0^ta^*(\xi)A^2(\xi)d\xi+\frac{1}{2}\int_0^t\frac{1}{a^*(\xi)}\liminf_k|\partial\phi_{\varepsilon_k}|^2(\tilde{u}^{\tau_k,\varepsilon_k}(\xi))d\xi\\
		&&\quad\quad+\phi(u(t))\\
		&\ge&\frac{1}{2}\int_0^ta^*(\xi)|u'|^2(\xi)d\xi+\frac{1}{2}\int_0^t\frac{1}{a^*(\xi)}|\partial\phi|^2(u(\xi))d\xi+\phi(u(t))
	\end{eqnarray*}
	and the result follows.
\end{proof}

\subsection{Fast-converging sequences}

Now, we treat the case when we only make the assumption of $\Gamma$-convergence of the sequence of the functionals $\phi_\e$, which always holds up to subsequences in separable metric spaces. In what follows we set $\phi=\Gamma$-$\lim_\varepsilon\phi_\varepsilon$.
Under these weaker hypotheses not in every $\tau$-$\varepsilon$ regime do we have commutation between the $\Gamma$-limit and the minimizing movement, as shown by the following result, which is a readjustment to the perturbed case of the result in \cite{bra2} (Theorem 8.1).

\begin{theorem}\label{fastconv}
	Let conditions from $1$ to $5$ of Remark $\ref{epmm}$ hold, and let the family of functionals $\big\{\phi_\varepsilon\big\}$ be equi-mildly-coercive; that is, for every $c>0$ there exists a $d$-compact set $K_c$ such that for all $\varepsilon>0$
	$$\inf_{u\in S}\big\{\phi_\varepsilon(u)+c\,d^2(u,u^*)\big\}=\inf_{u\in K_c}\big\{\phi_\varepsilon(u)+c\,d^2(u,u^*)\big\}$$
	where $u^*$ is the same  as in condition {\rm 2} of Remark {\rm\ref{epmm}}. Then, if $u_0^{\tau,\e}=u_0^\e$ we have that
	\begin{itemize}
		\item[\rm(i)] there exists a scale $\varepsilon(\tau)$ such that if $\varepsilon\le\varepsilon(\tau)$ every $\{a^\tau\}$-perturbed minimizing movement along $\big\{\phi_\varepsilon\big\}$ is a $\{a^\tau\}$-perturbed minimizing movement with respect to $\phi$;
		\item[\rm(ii)] there exists a scale $\tau(\varepsilon)$ such that if $\tau\le\tau(\varepsilon)$ every $\{a^\tau\}$-perturbed minimizing movement along $\big\{\phi_\varepsilon\big\}$ is a limit curve of the sequence $\big\{u^\varepsilon\big\}$, where, for every fixed $\e$, $u^\varepsilon$ is a $\{a^\tau\}$-perturbed minimizing movement with respect to the single functional $\phi_\varepsilon$.
	\end{itemize}
\end{theorem}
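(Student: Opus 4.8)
The plan is to prove the two statements by a diagonalization argument that exploits the two extreme time–energy scale separations. The core idea is that when one parameter is sent to zero much faster than the other, the slow parameter can be treated as essentially fixed along the iterated minimization scheme \eqref{problem}, so the scheme reduces (in the limit) to one of the two known single-parameter schemes: either the $\{a^\tau\}$-perturbed scheme for the fixed limit functional $\phi$ (case (i)), or, for each fixed $\e$, the $\{a^\tau\}$-perturbed scheme for the single functional $\phi_\e$ (case (ii)).

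First I would set up the discrete machinery uniformly in the parameters. Using the equi-mild-coercivity hypothesis together with conditions $1$–$5$ of Remark \ref{epmm}, the minimization in each step of \eqref{problem} can be restricted to a fixed $d$-compact set $K_c$, with $c$ determined by the a priori bounds \eqref{precomp}; this guarantees that the minimizers $u_n^{\tau,\e}$ exist and remain in a compact set independent of $\e$ and $\tau$. I would then record the basic perturbed minimizing-movement inequality coming from minimality at each step, namely that for the chosen minimizer one has
\begin{equation}
\phi_\e(u_n^{\tau,\e})+a_n^\tau\frac{d^2(u_n^{\tau,\e},u_{n-1}^{\tau,\e})}{2\tau}\le\phi_\e(v)+a_n^\tau\frac{d^2(v,u_{n-1}^{\tau,\e})}{2\tau}
\end{equation}
for every competitor $v\in S$, which will be the quantitative tool for comparing the two scales.

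For statement (i), I would fix $\tau$ and let $\e\to0$ first, along the single step $n$. The strategy is a stability-of-minimizers argument: for each fixed $n$ and $\tau$, as $\e\to0$ the functionals $u\mapsto\phi_\e(u)+a_n^\tau d^2(u,u_{n-1}^{\tau})/2\tau$ $\Gamma$-converge to $u\mapsto\phi(u)+a_n^\tau d^2(u,u_{n-1}^{\tau})/2\tau$ on the compact $K_c$, because the squared-distance term is continuous for the $d$-topology and $\phi=\Gamma\text{-}\lim_\e\phi_\e$; hence minimizers converge to a minimizer of the limit problem and the minimum values converge. Iterating this over the finitely many steps up to time $T$ (a finite induction on $n$ for each fixed $\tau$, since $T/\tau$ is finite) and then extracting a diagonal scale $\e(\tau)\to0$ slowly enough that the single-step approximations hold simultaneously for all $n\le T/\tau$, I would obtain that the discrete orbits $u^{\tau,\e(\tau)}$ are asymptotically indistinguishable from the discrete orbits of the $\{a^\tau\}$-perturbed scheme for $\phi$; passing to the limit $\tau\to0$ then identifies the limit as a perturbed minimizing movement for $\phi$.

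For statement (ii) the roles reverse: I would fix $\e$ and send $\tau\to0$ first. For each fixed $\e$, conditions $1$–$5$ of Remark \ref{epmm} applied to the single functional $\phi_\e$ yield (via the results of \cite{tri}) a $\{a^\tau\}$-perturbed minimizing movement $u^\e$ together with the precompactness and regularity estimates \eqref{precomp}–\eqref{regularity}, uniform in $\e$ by the equi-assumptions. The diagonal scale $\tau(\e)\to0$ is then chosen so that $u^{\tau(\e),\e}$ stays within a vanishing distance of $u^\e$ in the modulus of continuity $\theta_T$; letting $\e\to0$ and invoking the uniform Ascoli–Arzelà compactness from Remark \ref{epmm} shows that every limit of $u^{\tau(\e),\e}$ is a limit of the curves $u^\e$, as claimed. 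The main obstacle I anticipate is making the diagonal choice uniform over the growing number of minimization steps: as $\tau\to0$ the number of steps $\lceil T/\tau\rceil$ diverges, so the per-step stability estimates in case (i) must be controlled \emph{uniformly in $n$} to survive summation, and this is precisely where the equi-coercivity and equicompactness (conditions $2$–$3$) must be used to keep all iterates in a single compact set and keep the $\Gamma$-convergence approximation uniform; I would handle this by quantifying the single-step convergence rate on $K_c$ and choosing $\e(\tau)$ to beat the worst rate over the finitely many steps.
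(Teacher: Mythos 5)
Your proposal is correct and follows essentially the same route as the paper: part (i) by stability of the single-step minimum problems under $\Gamma$-convergence (the squared-distance term being a continuous perturbation, with convergence of minima guaranteed by the equi-mild-coercivity), a finite induction on the steps for fixed $\tau$, and a diagonal choice of $\e(\tau)$; part (ii) by fixing $\e$, using the $\e$-independent precompactness and equicontinuity estimates \eqref{precomp}--\eqref{regularity} and Ascoli--Arzel\`a, followed by a diagonal choice of $\tau(\e)$. The uniformity-in-$n$ concern you raise at the end is resolved exactly as you say: for each fixed $\tau$ only finitely many steps occur on $[0,T]$, so no uniform rate is needed and the diagonal argument absorbs the rest.
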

\begin{proof}
	(i) by assumption 4 of Remark \ref{epmm} it is not restrictive to assume that $u_0^\e\to u_0$.
	With fixed $\tau$, for any sequence $\varepsilon_n\to0$ and any $v_n\to v$ the term $d^2(v_n,u_0^{\tau,\varepsilon_n})$ converges to $d^2(v,u_0)$ which implies that
			$$\Gamma\text{-}\lim_{\varepsilon\to0}\bigg(\phi_\varepsilon(u)+a_1^\tau\frac{d^2(u,u_0^{\tau,\varepsilon})}{2\tau}\bigg) = \phi(u)+a_1^\tau\frac{d^2(u,u_0)}{2\tau}$$
			and, by the equicoerciveness assumption, we have the convergence of the minima
			$$\lim_{\varepsilon\to0}\min_{u\in S}\bigg\{\phi_\varepsilon(u)+a_1^\tau\frac{d^2(u,u_0^{\tau,\varepsilon})}{2\tau}\bigg\} = \min_{u\in S}\bigg\{\phi(u)+a_1^\tau\frac{d^2(u,u_0^\tau)}{2\tau}\bigg\}.$$
			Hence, every minimizer $u_1^{\tau,\varepsilon}$ converges to a minimizer $u_1^\tau$ for the corresponding minimum problem with respect to $\phi$ when $\varepsilon$ tends to $0$. Repeating the same argument every $u_n^{\tau,\varepsilon}$ converges to the corresponding $u_n^\tau$ for every $n\ge1$ and so we have the convergence of the discrete solutions
			$$\lim_{\varepsilon\to0}u^{\tau,\varepsilon}=u^\tau.$$
			Since $u^\tau$ converges to a $\{a^\tau\}$-perturbed minimizing movement with respect to $\phi$, a diagonal argument defines $\varepsilon(\tau)$.
			
		(ii) with fixed $\varepsilon>0$, we have convergence of discrete solutions to $u^{\varepsilon}$ and these perturbed minimizing movements are equicompact and equicontinuous. This follows by passing to the limit in (\ref{precomp}) and (\ref{regularity}) in Remark \ref{epmm}, since these properties depend on the perturbations, which do not depend on $\varepsilon$. Hence, the result follows by the Ascoli-Arzel\`a Theorem.
\end{proof}

\begin{remark}\label{rmk-fastconv}
	In the sequel, we will use the notation $u^0$ and $u^\infty$ to indicate perturbed minimizing movements obtained under condition (i) and (ii), respectively, of  Theorem \ref{fastconv}.
\end{remark}

\section{Examples of critical regimes} From Theorem \ref{fastconv} we infer that, when the evolutions given by opposite types of fast-converging sequences differ, varying between those we may encounter one or more {\em critical $\e$-$\tau$ regimes}, at which the minimizing movements describe an effective motion different from the extreme ones. Various types of critical regimes have been already studied in \cite{bra2} in the case of un-perturbed minimizing movements. Here we highlight some effects of the perturbations with two simple examples.

\smallskip
On the real line, we consider the functions 
$$\phi_\varepsilon(t)=
\begin{cases}
	-t&t\in\varepsilon\mathbb{Z}\\
	+\infty&\text{otherwise}
\end{cases}$$
as a prototype of multi-well energies with different well depth. It is not restrictive to take
$u_0^{\tau,\varepsilon}\equiv0$. The perturbations $\{a^\tau\}$ are assumed to satisfy assumption 5 of Remark \ref{epmm}. 

All the hypotheses of Remark \ref{epmm} are satisfied, so that $\{a^\tau\}$-perturbed minimizing movements along $\big\{\phi_\varepsilon\big\}$ are well-defined curves $u:[0,+\infty)\to\mathbb{R}$. 
In this case, the iterated minimization algorithm \eqref{problem} takes the form
$$
u_n^{\tau,\varepsilon}\in\argmin{u\in\varepsilon\mathbb{Z}}\bigg\{-u+a_n^\tau\frac{|u-u_{n-1}^{\tau,\varepsilon}|^2}{2\tau}\bigg\},
$$
so that $u_n^{\tau,\varepsilon}$ is the point of $\varepsilon\mathbb{Z}$  closest to the minimum of the parabola; that is, $u_{n-1}^{\tau,\varepsilon}+\tau/a_n^\tau$.

Note that if $\tau/a_n^\tau\in\varepsilon(\mathbb{Z}+1/2)$ there are two such points, so that we have
$$u_n^{\tau,\varepsilon}=
\begin{cases}
	u_{n-1}^{\tau,\varepsilon}+\tau/a_n^\tau\pm\varepsilon/2&\text{if }\tau/a_n^\tau\in\varepsilon(\mathbb{Z}+1/2)\\
	u_{n-1}^{\tau,\varepsilon}+\varepsilon\lfloor\tau/(\varepsilon a_n^\tau)+1/2\rfloor&\text{otherwise.}
\end{cases}$$
The cases of double minimizers can be treated separately, so for simplicity we consider the assumption
\begin{equation}\label{nob}
	\frac{1}{a_n^\tau}\not\in\frac{\varepsilon}{\tau}\bigg(\mathbb{Z}+\frac{1}{2}\bigg)
\end{equation}
in which case $\{u_n^{\tau,\varepsilon}\}$ is defined iteratively by
\begin{equation}\label{ste}
	u_n^{\tau,\varepsilon}=u_{n-1}^{\tau,\varepsilon}+\varepsilon\bigg\lfloor\frac{\tau}{\varepsilon a_n^\tau}+\frac{1}{2}\bigg\rfloor.
\end{equation}

\subsection{Pinning}
We say that the perturbed motion is {\em pinned} if there exists a constant $c>0$ such that for any $a^\tau>c$ then $u\equiv u_0$. We define the infimum of such constants (which may also be $0$) as the {\em pinning threshold} of the motion.

Define $\gamma=\gamma(\tau,\varepsilon)=\varepsilon/\tau$. Condition (\ref{nob}), which ensures the uniqueness of the minima, is
$$\frac{1}{a_n^\tau}\not\in\gamma\bigg(\mathbb{Z}+\frac{1}{2}\bigg).$$
By (\ref{ste}), if $a_n^\tau<2/\gamma$ then $u_n^{\tau,\varepsilon}>u_{n-1}^{\tau,\varepsilon}$, otherwise we have $u_n^{\tau,\varepsilon}=u_{n-1}^{\tau,\varepsilon}$. Hence, if $a^\tau>2/\gamma$ the motion is pinned; i.e., $u(t)\equiv0$ and $2/\gamma$ is the pinning threshold.
 
Note that $a^\tau>2/\gamma$ is a sufficient condition in order to obtain a pinned motion, but not necessary. In fact, consider the set $I_{\gamma,\tau}(t):=\{\xi\in[0,t]\,|\,a^\tau(\xi)\le2/\gamma\}$. By \eqref{ste} the discrete solution is
$$u^{\tau,\varepsilon}(t)=\varepsilon\sum_{n\tau\in I_{\gamma,\tau}(t)}\left\lfloor\frac{\tau}{\varepsilon a_n^{\tau}}+\frac{1}{2}\right\rfloor=\gamma\int_{I_{\gamma,\tau}(t)}\bigg\lfloor\frac{1}{a^{\tau}(\xi)\gamma}+\frac{1}{2}\bigg\rfloor d\xi$$
and we obtain the estimate
$$\big|I_{\gamma,\tau}(t)\big|\le u^{\tau,\varepsilon}(t)\le\int_{I_{\gamma,\tau}(t)}\frac{1}{a^{\tau}(\xi)}d\xi+\frac{\gamma}{2}\big|I_{\gamma,\tau}(t)\big|.$$
Hence, if the following condition over the perturbations $\{a^\tau\}$ is satisfied
\begin{equation}\label{pinning}
	\lim_k\big|I_{\gamma_k,\tau_k}(t)\big|=0 \hbox{ for all } t\ge0,
\end{equation}
where $\gamma_k=\gamma(\e_k,\tau_k)$, we have a pinned motion $u=\lim_k u^{\tau_k,\varepsilon_k}$. Otherwise, if for some $t_0\ge0$
$$\limsup_k\big|I_{\gamma_k,\tau_k}(t_0)\big|>0$$
taking the limit along a suitable subsequence of $(\tau_k)$ we obtain $u(t_0)>0$.

\begin{remark}
In case of $N$-periodic perturbations, we have that condition (\ref{pinning}) is satisfied if and only if
\begin{equation}\label{pinningth}
	\alpha:=\inf_{1\le n\le N}a_n^\tau>\frac{2}{\gamma}.
\end{equation}
So in this case, the pinned perturbed motions are characterized by the pinning threshold; 
i.e., if $\alpha>2/\gamma$ the motion is pinned, otherwise it is not.
\end{remark}

\subsection{Fast-convergences}\label{fastconvergences}

In this case, the scales defined in (i) and (ii) of Theorem \ref{fastconv}, can be chosen as $\tau(\varepsilon)=o(\varepsilon)$ and $\varepsilon(\tau)=o(\tau)$.

Indeed, consider $\varepsilon(\tau)=o(\tau)$, by (\ref{ste}) we have, for every $t\ge0$
	$$u^{\tau,\varepsilon}(t)=\sum_{n=1}^{\lceil t/\tau\rceil}\varepsilon\bigg\lfloor\frac{\tau}{\varepsilon a_n^{\tau}}+\frac{1}{2}\bigg\rfloor$$
so taking the limit for $\tau\to0$ in
	$$\sum_{n=1}^{\lceil t/\tau\rceil}\frac{\tau}{a_n^\tau}-\bigg\lfloor\frac{t}{\tau}\bigg\rfloor\varepsilon\le u^{\tau,\varepsilon}(t)\le\sum_{n=1}^{\lceil t/\tau\rceil}\frac{\tau}{a_n^\tau}+\bigg\lfloor\frac{t}{\tau}\bigg\rfloor\varepsilon$$
we obtain
	\begin{equation}\label{fast-motion}
	u^0(t)=\int_0^t\frac{1}{a^*(\xi)}d\xi,
	\end{equation}
a $\{a^\tau\}$-perturbed minimizing movement with respect to $\phi(t)=-t=\Gamma$-$\lim_{\varepsilon\to0}\phi_\varepsilon(t)$.
		
Now, let $\tau(\varepsilon)=o(\varepsilon)$, so that $1/\gamma(\varepsilon)\to0$ for $\varepsilon\to0$. Assumption 5 of Remark \ref{epmm} implies that
	$$\big|I_{\delta,\tau}(t)\big|\le\frac{2}{\gamma}\int_{I_{\delta,\tau}(t)}\frac{1}{a^\tau(\xi)}d\xi\le\frac{2}{\gamma}\bigg\Vert\frac{1}{a^\tau}\bigg\Vert_{L^1(0,t)}\le\frac{2}{\gamma}C_{0,t}.$$
Hence, pinning condition (\ref{pinning}) is satisfied; i.e., $u^\infty(t)\equiv0$ for these regimes, but for every $\varepsilon$ the perturbed minimizing movements $u^\varepsilon$ are identically $0$ because $\phi_\varepsilon$ has a discrete domain, so the result follows.

This shows that the critical regimes are such that $\varepsilon(\tau)=\gamma(\tau)\tau$, with $\gamma(\tau)$ a bounded function with $\inf_\tau\gamma(\tau)>0$. Without loss of generality consider the regimes
$$\varepsilon=\gamma\tau.$$
In what follows, we use the notation $u^\gamma$ for the $\{a^\tau\}$-perturbed minimizing movements along $\{\phi_{\gamma\tau}\}$.

\subsection{Periodic perturbations}

Now, given $0<\alpha<\beta$, choose general periodic perturbations
$$a_n^\tau=
	\begin{cases}
		\alpha&n\text{ odd}\\
		\beta&n\text{ even.}
	\end{cases}$$
Such perturbations *weakly converge to the inverse of the harmonic mean between $\alpha$ and $\beta$; that is, $1/a^*=(1/\alpha+1/\beta)/2$. Hence, from \eqref{fast-motion} and the analysis of the pinning phenomenon performed above we have
$$u^0(t)=\frac{1}{a^*}t,\quad u^\infty(t)\equiv0,$$
where we have used the notation introduced in Remark \ref{rmk-fastconv}.

In the critical regimes, we have different perturbed minimizing movements depending on $\gamma$, chosen according to condition \eqref{nob}. Define $k_\alpha=\lfloor1/(\alpha\gamma)+1/2\rfloor$ and $k_\beta=\lfloor1/(\beta\gamma)+1/2\rfloor$. By \eqref{ste} we have
$$u_n^{\tau,\varepsilon}=
	\begin{cases}
		u_{n-1}^{\tau,\varepsilon}+k_\alpha\varepsilon&n\text{ odd}\\
		u_{n-1}^{\tau,\varepsilon}+k_\beta\varepsilon&n\text{ even}
	\end{cases}
	=\bigg\lceil\frac{n}{2}\bigg\rceil k_\alpha\varepsilon+\bigg\lfloor\frac{n}{2}\bigg\rfloor k_\beta\varepsilon$$
and so the discrete solution is
$$u^{\tau,\varepsilon}(t)=\bigg\lceil\frac{t}{2\tau}\bigg\rceil k_\alpha\varepsilon+\bigg\lfloor\frac{t}{2\tau}\bigg\rfloor k_\beta\varepsilon=\bigg\lceil\frac{t}{2\tau}\bigg\rceil k_\alpha\gamma\tau+\bigg\lfloor\frac{t}{2\tau}\bigg\rfloor k_\beta\gamma\tau$$
and taking the limit we have
$$u^\gamma(t)=\frac{1}{a_\gamma}t,\text{ with }\frac{1}{a_\gamma}:=\gamma\frac{k_\alpha+k_\beta}{2}.$$

\begin{figure}[htp]
	\includegraphics[height=6 cm]{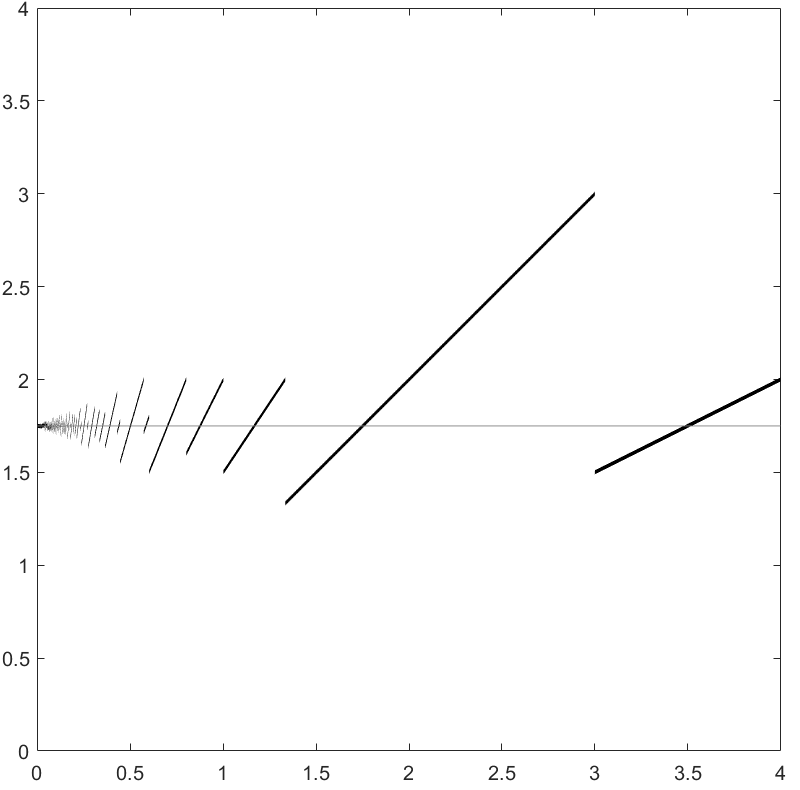}\includegraphics[height=6 cm]{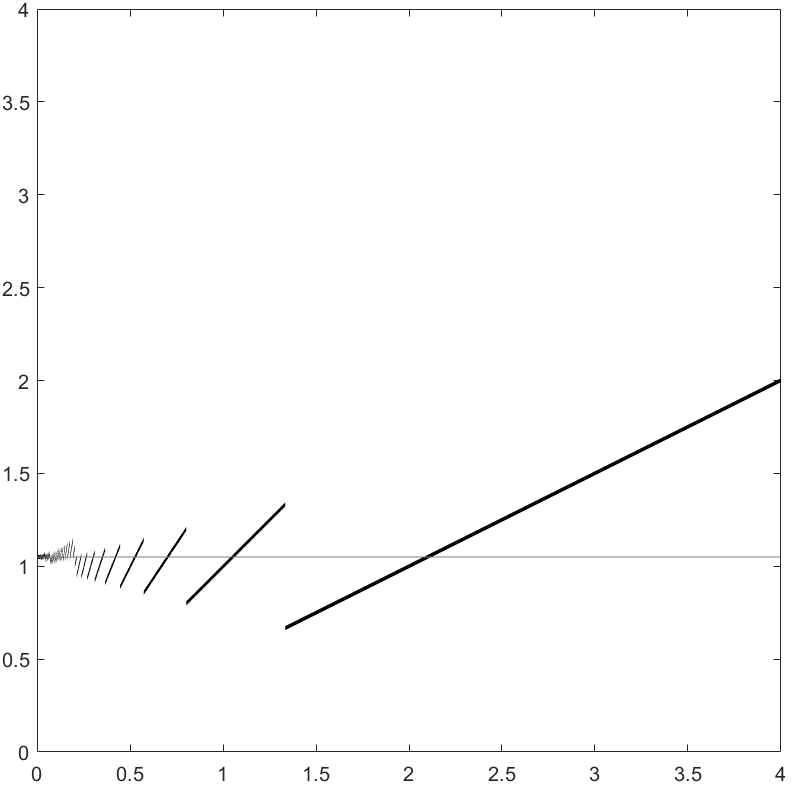}
	\caption[]{The dark line represents the graph of $\gamma\mapsto1/a^\gamma$, the light line is the constant $1/a^*$. On the left $\alpha>\beta/2$ so the sup is reached in $\gamma_1^\beta$, on the right $\alpha<\beta/2$ and the sup is reached in $\gamma_1^\alpha$.}
	\label{fig1}
\end{figure}

\begin{remark}
By varying $\gamma$ we obtain different perturbed minimizing movements depending on the value $1/a_\gamma$. The function $\gamma\mapsto 1/a_\gamma$ is a piecewise linear function having jumps in the bifurcation values; i.e., by (\ref{nob}) in
$$\gamma^\alpha_j:=\frac{2}{(2j-1)\alpha},\quad\gamma^\beta_j:=\frac{2}{(2j-1)\beta},$$
and $1/a_\gamma=0$ for all $\gamma>\gamma_1^\alpha$ as stated by condition (\ref{pinningth}). Moreover $1/a_\gamma\to1/a^*$ when $\gamma\to0$ (see Figure \ref{fig1}). In fact $1/a_\gamma$ can be seen as an approximation of the harmonic mean between $\alpha$ and $\beta$ taking values on $\gamma\mathbb{Z}$.

We shortly examine the determination of the largest velocity that the motion could reach. To do this it suffices to evaluate $1/a^\gamma$ at the right-end extremes of the continuity intervals, since there the function is increasing. This corresponds to considering the greatest jump value at $\gamma_j^\alpha$ and $\gamma_j^\beta$ for all $j\ge1$. These are
\begin{align*}
\frac{1}{a^{\gamma_j^\alpha}} &= \frac{j}{(2j-1)\alpha}+\frac{\gamma_j^\alpha}{2}\bigg\lfloor\frac{1}{\beta\gamma_j^\alpha}+\frac{1}{2}\bigg\rfloor=\frac{1}{(2j-1)\alpha}\left(j+\left\lfloor\frac{(2j-1)\alpha}{2\beta}+\frac{1}{2}\right\rfloor\right) \\
\frac{1}{a^{\gamma_j^\beta}} &= \frac{\gamma_j^\beta}{2}\bigg\lfloor\frac{1}{\alpha\gamma_j^\beta}+\frac{1}{2}\bigg\rfloor+\frac{j}{(2j-1)\beta}=\frac{1}{(2j-1)\beta}\left(\left\lfloor \frac{(2j-1)\beta}{2\alpha}+\frac{1}{2} \right\rfloor+j\right).
\end{align*}
Since $\alpha<\beta$, the value $(2j-1)\alpha/(2\beta)+1/2$ is less than $j$ so that its lower integer part is less then or equal to $j-1$, which yields $1/a^{\gamma_j^\alpha}\le1/\alpha=1/a^{\gamma_1^\alpha}$. While, when $\alpha\ge\beta/2$, $(2j-1)\beta/(2\alpha)+1/2\le2j-1/2$, so $1/a^{\gamma_j^\beta}\le1/\beta+j/((2j-1)\beta)<2/\beta=1/a^{\gamma_1^\beta}$. Now, since $1/\alpha\ge1/a^{\gamma_1^\alpha}$ if and only if $\alpha\le\beta/2$, we have that
$$\sup_{\gamma>0}\frac{1}{a^\gamma}=\max_{j\ge1}\left\{\frac{1}{a^{\gamma_j^\alpha}},\frac{1}{a^{\gamma_j^\beta}}\right\}=\begin{cases}
1/\alpha&\alpha\le\beta/2 \\
2/\beta&\alpha\ge\beta/2.\end{cases}$$
Therefore, the largest velocity of $u^\gamma$, which is reached in $2/\alpha$ if $\alpha\le\beta/2$ and in $2/\beta$ otherwise, depends on the ratio $\alpha/\beta$ (Figure \ref{fig1}).
\end{remark}

The behavior of $1/a^\gamma$ when $\gamma\to0,+\infty$ gives a compatibility property for the perturbed minimizing movements. Indeed
$$\lim_{\gamma\to0}u^\gamma=u^0,\quad\lim_{\gamma\to+\infty}u^\gamma=u^\infty,$$
uniformly on compacts subsets of $(0,+\infty)$.

\subsection{Different flows generated by perturbations with the same harmonic mean}

Slightly modifying the functionals, the situation could be more complicated. Consider the sets $Z_\varepsilon:=3\varepsilon\mathbb{Z}\cup\e(3\mathbb{Z}+1)$ and the functionals
$$\phi_\varepsilon(t)=\begin{cases}-t&t\in Z_\varepsilon\\+\infty&\text{otherwise}\end{cases}$$
with initial data $u_0^{\tau,\varepsilon}\equiv0$, and perturbations satisfying assumption 5 of Remark \ref{epmm}. As well as in the previous section, when $\tau=o(\varepsilon)$ or $\varepsilon=o(\tau)$ we have fast-convergences, so we study the case $\varepsilon=\gamma\tau$, with $\gamma\in(0,+\infty)$.

The $n$-th step of the discrete solution is the projection of $u_{n-1}^{\tau,\varepsilon}+\tau/a_n^\tau$ on $Z_\varepsilon$. Note that the points of the domain are not equidistanced. We define the projection on $Z_\varepsilon$
\begin{equation}\label{ste2}
	P_{Z_\e}(t)=\begin{cases}
t\pm\e/2 &\text{if }t\in\e(3\mathbb{Z}+1/2) \\
t\pm\e &\text{if }t\in\e(3\mathbb{Z}+2) \\
\e\lceil(2t+2)/(3\e)\rceil &\text{otherwise}\end{cases}
\end{equation}
and $u_n^{\tau,\varepsilon}=P_{Z_\varepsilon}(u_{n-1}^{\tau,\varepsilon}+\tau/a_n^\tau)$. Note that by (\ref{ste2}), condition (\ref{nob}), which ensures the absence of bifurcations, is replaced by
$$\frac{1}{a_n^\tau}\not\in\gamma\left(\left(\mathbb{Z}+\frac{1}{2}\right)\cup\left(\mathbb{Z}+2\right)\right)$$
and there are two critical values of the perturbations which affect the motion:
\begin{itemize}
	\item[(i)] if $a_n^\tau>2/\gamma$ \emph{total pinning}; i.e., $u_n^{\tau,\e}=u_{n+1}^{\tau,\e}$;
	\item[(ii)] if $1/\gamma<a_n^\tau<2/\gamma$ \emph{partial pinning}; i.e., if $u_{n-1}^{\tau,\e}+\e\not\in Z_\e$ then $u_n^{\tau,\varepsilon}=u_{n-1}^{\tau,\varepsilon}$ otherwise $u_n^{\tau,\varepsilon}=u_{n-1}^{\tau,\varepsilon}+\e$
	\item[(iii)] if $a_n^\tau<1/\gamma$ then $u_n^{\tau,\varepsilon}>u_{n-1}^{\tau,\varepsilon}.$
\end{itemize}

Consider $\gamma\in(1/2,1)$. We present two perturbations oscillating between the values 1 and 2 with a different period, having the same harmonic mean, which generate two different motions. According to the above conditions, when $a_n^\tau=2$ we are in the case we have partial pinning, when $a_n^\tau=1$ we have no pinning. Consider first
$$a_n^\tau=
\begin{cases}
	1&n\text{ odd}\\
	2&n\text{ even.}
\end{cases}$$
Except for an initial phase displacement, when $n$ is odd we have that $u_n^{\tau,\varepsilon}=u_{n-1}^{\tau,\varepsilon}+2\varepsilon$, and when $n$ is even we have $u_n^{\tau,\varepsilon}=u_{n-1}^{\tau,\varepsilon}+\varepsilon$.
Hence, for large $n$ we have
$$u_n^{\tau,\varepsilon}=o(1)+\left\lceil\frac{n}{2}\right\rceil\varepsilon+\left\lfloor\frac{n}{2}\right\rfloor2\varepsilon$$
and the discrete solution is
$$u^{\tau,\varepsilon}=o(1)+\gamma\left(\left\lceil\frac{1}{2}\left\lceil\frac{t}{\tau}\right\rceil\right\rceil\tau+\left\lfloor\frac{1}{2}\left\lceil\frac{t}{\tau}\right\rceil\right\rfloor2\tau\right).$$
By taking the limit as $\tau\to0$, we have
$$u^\gamma(t)=\gamma\frac{3}{2}t.$$

Now, consider the perturbations
$$a_n^\tau=
\begin{cases}
	1&n\in\big(4\mathbb{N}+1\big)\cup(4\mathbb{N}+2)\\
	2&n\in\big(4\mathbb{N}+3\big)\cup4\mathbb{N}.
\end{cases}$$
After an initial phase displacement, we have the following periodic situation. Let $n\in4\mathbb{N}+1$, we always have that $u_n^{\tau,\e}=\e(3k+1)$ for some integer $k$ and
\begin{itemize}
	\item[(i)] $a_n^\tau=1$ we have no pinning and $u_{n+1}^{\tau,\e}=u_n^{\tau,\e}+2\e$;
	\item[(ii)] $a_{n+1}^\tau=1$ again no pinning and $u_{n+2}^{\tau,\e}=u_n^{\tau,\e}+3\e$;
	\item[(iii)] $a_{n+2}^\tau=2$ we have partial pinning and $u_{n+2}^{\tau,\e}+\e\not\in Z_\e$, so $u_{n+3}^{\tau,\e}=u_{n+2}^{\tau,\e}$;
	\item[(iv)] $a_{n+3}^\tau=2$ and, as above, $u_{n+4}^{\tau,\e}=u_{n+2}^{\tau,\e}.$
\end{itemize}
Hence, for any large $n$ we have $u_{n+4}^{\tau\varepsilon}=u_n^{\tau,\varepsilon}+3\varepsilon$; that is, $u_{4n}^{\tau,\varepsilon}=o(1)+4\varepsilon+3n\varepsilon$, and the discrete solution is
$$u^{\tau,\varepsilon}(t)=o(1)+\gamma\left(\left\lceil\frac{1}{4}\left\lceil\frac{t}{\tau}\right\rceil\right\rceil\tau+\left\lfloor\frac{1}{4}\left\lceil\frac{t}{\tau}\right\rceil\right\rfloor2\tau\right).$$
Taking the limit we obtain
$$u^\gamma(t)=\gamma\frac{3}{4}t.$$
	
\section{Oscillating energies}

In this section we study the homogenization of perturbed gradient flows along wiggly energies, which has already been treated in its unperturbed formulation in \cite{ansbrazim} and in \cite{bra2} (Example 8.2). Consider a positive constant $T>0$ and $W\in C^1(\mathbb{R})$ a $1$-periodic even function with Lipschitz derivative, $\Vert W'\Vert_\infty=1$ and zero average. Now, consider the energies
$$\phi_\e(t)=\e\, W\bigg(\frac{t}{\e}\bigg)+Tt,$$
with initial data $u_0^{\tau,\e}=u_0^\e$, and assume that all the hypotheses of Remark \ref{epmm} are satisfied, so that we have perturbed gradient flows along $\phi_\e$.

In \cite{ansbrazim}, it is proved that critical regimes for this motion are those such that the ratio $\varepsilon/\tau$ is bounded with positive infimum, and the case $\varepsilon=\gamma\tau$ is studied. In such critical regimes, there exists a pinning threshold of initial data and minimizing movements are linear functions with a homogenized velocity. We will prove that for perturbed minimizing movements analogous results hold.

\subsection{Fast convergences}

We prove that in the regimes $\varepsilon(\tau)=o(\tau)$ and $\tau(\varepsilon)=o(\varepsilon)$ we have fast convergences.

Consider $\e(\tau)=o(\tau)$. Denote by $\phi(u)=Tu$ the $\Gamma$-limit of $\phi_\e$. In order to lighten the notation, for every $n$ and $\tau$ we define $F(u):=\phi(u)+a_n^\tau(u-u_{n-1}^{\tau,\e})^2/2\tau$ and $F_\e(u):=\phi_\e(u)+a_n^\tau(u-u_{n-1}^{\tau,\e})^2/2\tau$. By a direct computation we can write $F_\e(u)=F(u)+\e\, W(u/\e)$. Now, denote
	$$v:=\argmin{u\in\mathbb{R}}F(u)=u_{n-1}^{\tau,\varepsilon}-\frac{\tau}{a_n^\tau}T.$$
Since $\big|\varepsilon W(t/\varepsilon)|\le\varepsilon/2$, we have that
\begin{equation}\label{cond}
	F(u_n^{\tau,\varepsilon})<F(v)+\varepsilon.
\end{equation}
	Indeed, otherwise $F_\e(u_n^{\tau,\e}) \ge F(u_n^{\tau,\e})-\e/2 \ge F(v)+\e/2 > F_\e(v)$, which is in contrast with the minimality of $u_n^{\tau,\varepsilon}$. By the minimality of $v$ we have that $F(u) = a_n^\tau(u-v)^2/2\tau+F(v)$, so that, by \eqref{cond}, we have that
\begin{equation}\label{dist}
	|u_n^{\tau,\e}-v| \le \left(\frac{2\tau\e}{a_n^\tau}\right)^{1\over 2}.
\end{equation}
Now take $\{n\},\{m\}$ two family of integers (depending on $\tau$) such that $n>m$ and both $n\tau$ and $m\tau$ converge to $t\ge0$. By \eqref{dist}, applying a discrete H\"older inequality we have that
	$$\left|u_n^{\tau,\varepsilon}-u_m^{\tau,\varepsilon}+T\sum_{i=m+1}^{n}\frac{\tau}{a_i^\tau}\right| \le \sum_{i=m+1}^{n}\left(2\frac{1}{a_i^\tau}\tau\varepsilon\right)^{1\over2} \le \bigg(2\tau\varepsilon(n-m)\sum_{i=m+1}^n\frac{1}{a_i^\tau}\bigg)^{\frac{1}{2}}$$
	and dividing both the members by $(n-m)\tau$ we get
	$$\bigg|\frac{u_n^{\tau,\varepsilon}-u_m^{\tau,\varepsilon}}{(n-m)\tau}+T\frac{1}{n-m}\sum_{i=m+1}^{n}\frac{1}{a_i^\tau}\bigg| \le \bigg(2\frac{\varepsilon}{\tau}\frac{1}{n-m}\sum_{i=m+1}^n\frac{1}{a_i^\tau}\bigg)^{\frac{1}{2}}.$$
	Taking the limit as $\tau\to0$, by Lebesgue's Theorem (up to subsequences) we have that
	$$u'(t)=-\frac{1}{a^*(t)}T,\text{ for almost every } t\ge0;$$
	hence, $u$ is a $\{a^\tau\}$-perturbed minimizing movement with respect to $\phi$.	
	
In order to show that the other extreme regime is $\tau(\varepsilon)=o(\varepsilon)$, we first characterize $u^\infty$.
For the single functional $\phi_\varepsilon$, perturbed minimizing movements are the solutions to
\begin{equation}\label{ode}
	u^\varepsilon(t)'=-\frac{1}{a^*(t)}\bigg(T+W'\bigg(\frac{u^\varepsilon(t)}{\varepsilon}\bigg)\bigg) \hbox{ for all }t\ge0.
\end{equation}
First, note that, for $T\le1$, the set of constant solutions $\big\{x\in\mathbb{R}\,\big|\,T+W'(x/\e)=0\big\}$ tends to be dense, so that for every initial value $u_0^\varepsilon$ (converging to some $u_0$) the solution $u^\varepsilon$ lives in an interval of length $\varepsilon$, so that $u^\infty(t)\equiv u_0$ for any $T\le1$.
Whereas, for $T>1$ integrating (\ref{ode}) from $t_1$ to $t_2$ we obtain
	$$\int_{u^\varepsilon(t_1)}^{u^\varepsilon(t_2)} \frac{1}{T+W'(s/\e)}ds=-\int_{t_1}^{t_2}\frac{1}{a^*(t)}dt.$$
	Now, $1/(T+W'(s))$ is a summable $1$-periodic function, so that the integrand on the left-hand side $L^1$-weak converges to the average $\int_0^1 1/(T+W'(s))ds$. We define the function
	$$f(T)=\begin{cases}
	0 & \text{if }T\le1\\ \displaystyle
	\Big(\int_0^1 {1\over T+W'(s)}ds\Big)^{-1} & \text{if }T>1.
	\end{cases}$$
	Taking the limit for $\varepsilon\to0$, and $t_1\to0, t_2\to t$ we have
	$$u^\infty(t)=u_0-f(T)\int_0^t\frac{1}{a^*(\xi)}d\xi \qquad\hbox{ for all } t\ge0.$$
If $\tau(\varepsilon)=o(\varepsilon)$ the same argument applied at the time-discrete level shows that $u^{\tau,\e}\to u^\infty$ as $\e\to0$.

\subsection{Critical regimes}

We now study the critical regimes. It is not restrictive to suppose that $\varepsilon=\gamma\tau$, with $\gamma>0$. Since
$$\argmin{u\in\mathbb{R}}\bigg\{\phi_\varepsilon(u)+a_n^\tau\frac{(u-u_{n-1}^{\tau,\varepsilon})^2}{2\tau}\bigg\}=\argmin{u\in\mathbb{R}}\bigg\{\frac{1}{\varepsilon}\phi_\varepsilon(u)+\frac{1}{\varepsilon}a_n^\tau\frac{(u-u_{n-1}^{\tau,\varepsilon})^2}{2\tau}\bigg\},$$
we reduce to the following rescaled problem by taking $y=u/\e$
\begin{equation}\label{rescaledp}
	\begin{cases}
		y_0^\tau=y_0\in\mathbb{R},\\ \displaystyle
		y_n^\tau\in\argmin{y\in\mathbb{R}}\Big\{\phi_1(y)+{a_n^\tau\over 2}\gamma(y-y_{n-1}^\tau)^2\Big\}.
	\end{cases}
\end{equation}
We obtain that $u_n^{\tau,\varepsilon}=\varepsilon y_n^\tau$, provided that $y_0=u_0^\varepsilon/\varepsilon$. The rescaled discrete solutions $y^\tau$ in general depends on $\tau$ because of the perturbations. If one takes, as in the following, periodic perturbations the dependence on $\tau$ will disappear.

First, we recall a useful result presented and proved in \cite{ansbrazim} Proposition 3.1.

\begin{lemma}\label{mp}
Given any function $\psi_1,\psi_2:\mathbb{R}\to\mathbb{R}$, and a positive constant $\beta>0$. For any $x_1,\,x_2\in\mathbb{R}$, if
$$y_1\in\argmin{t\in\mathbb{R}}\{\psi_1(t)+\beta(t-x_1)^2\},\quad y_2\in\argmin{t\in\mathbb{R}}\{\psi_2(t)+\beta(t-x_2)^2\}$$
then $\psi_1(y_1)-\psi_1(y_2)+\psi_2(y_2)-\psi_2(y_1)\le2\beta(x_1-x_2)(y_1-y_2)$.
\end{lemma}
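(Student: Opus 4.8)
The plan is to derive the inequality purely from the two defining minimality conditions, using no regularity of $\psi_1$ or $\psi_2$ beyond their being real-valued. First I would record the two variational inequalities obtained by testing each minimizer against the other as a competitor. Since $y_1$ minimizes $\psi_1(t)+\beta(t-x_1)^2$, choosing the competitor $t=y_2$ gives
\[
\psi_1(y_1)+\beta(y_1-x_1)^2\le\psi_1(y_2)+\beta(y_2-x_1)^2,
\]
and since $y_2$ minimizes $\psi_2(t)+\beta(t-x_2)^2$, choosing the competitor $t=y_1$ gives
\[
\psi_2(y_2)+\beta(y_2-x_2)^2\le\psi_2(y_1)+\beta(y_1-x_2)^2.
\]

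Next I would add these two inequalities. On the left the four function values combine, after moving the two subtracted terms across, into exactly $\psi_1(y_1)-\psi_1(y_2)+\psi_2(y_2)-\psi_2(y_1)$, while on the right the quadratic contributions assemble into $\beta$ times
\[
\big[(y_2-x_1)^2-(y_1-x_1)^2\big]+\big[(y_1-x_2)^2-(y_2-x_2)^2\big].
\]

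The only computation that carries the proof is the simplification of this bracketed quadratic. Factoring each difference of squares and collecting terms, the pure squares in $y_1,y_2,x_1,x_2$ cancel and the surviving cross terms reduce to $2(x_1-x_2)(y_1-y_2)$; multiplying by $\beta$ yields precisely the asserted bound $2\beta(x_1-x_2)(y_1-y_2)$. There is no genuine obstacle here: the positivity of $\beta$ is never inverted, no convexity or continuity of the $\psi_i$ is invoked, and the statement is a direct algebraic consequence of the two one-sided minimality comparisons, so the only care needed is the sign bookkeeping when expanding the squares.
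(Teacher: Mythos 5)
Your proof is correct, and it is the standard argument: the paper does not reprove Lemma \ref{mp} (it cites Proposition 3.1 of the Ansini--Braides--Zimmer reference), but the technique you use --- testing each minimizer against the other, summing the two inequalities, and simplifying the quadratic terms --- is exactly the one the paper itself mimics in its proof of Lemma \ref{pinninglem}. Your algebra checks out, so nothing further is needed.
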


{
By mimicking the argument of the proof of the previous lemma, we obtain the following useful result.

\begin{lemma}\label{pinninglem}
Given $\psi:\mathbb{R}\to\mathbb{R}$, and two positive constants $0<\alpha<\beta$. For any $y_0\in\mathbb{R}$, if
$$y^\alpha\in\argmin{t\in\mathbb{R}}\{\psi(t)+\alpha(t-y_0)^2\},\quad y^\beta\in\argmin{t\in\mathbb{R}}\{\psi(t)+\beta(t-y_0)^2\}$$
then $(y^\alpha-y^\beta)$ has the same sign of $(y^\alpha-y_0)$.
\end{lemma}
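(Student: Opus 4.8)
The plan is to mimic the comparison argument used for Lemma~\ref{mp}: I would exploit the minimality of $y^\alpha$ and $y^\beta$ by testing each one against the other, then add the two resulting inequalities so that the uncontrolled terms $\psi(y^\alpha)$ and $\psi(y^\beta)$ cancel, leaving a single scalar inequality that already encodes the claim. The point is that no structural assumption on $\psi$ is used anywhere: the existence of $y^\alpha,y^\beta$ is part of the hypothesis, and everything else follows from their defining inequalities.

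First I would write down the two minimality conditions. Testing the minimizer $y^\alpha$ with the competitor $y^\beta$ gives
$$\psi(y^\alpha)+\alpha(y^\alpha-y_0)^2\le\psi(y^\beta)+\alpha(y^\beta-y_0)^2,$$
and symmetrically testing $y^\beta$ with $y^\alpha$ gives
$$\psi(y^\beta)+\beta(y^\beta-y_0)^2\le\psi(y^\alpha)+\beta(y^\alpha-y_0)^2.$$
Adding these, the values $\psi(y^\alpha)$ and $\psi(y^\beta)$ cancel on the two sides, and after collecting the quadratic terms I obtain
$$(\alpha-\beta)\big[(y^\alpha-y_0)^2-(y^\beta-y_0)^2\big]\le0.$$
Since $\alpha<\beta$ the prefactor $\alpha-\beta$ is negative, which forces $(y^\alpha-y_0)^2\ge(y^\beta-y_0)^2$, that is, $|y^\alpha-y_0|\ge|y^\beta-y_0|$. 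Heuristically, the weaker penalty $\alpha$ allows its minimizer to sit at least as far from the base point $y_0$ as the minimizer associated with the stronger penalty $\beta$.

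Finally I would translate this distance inequality into the stated sign condition. Assuming without loss of generality (by reflecting about $y_0$) that $y^\alpha\ge y_0$, the bound $|y^\beta-y_0|\le y^\alpha-y_0$ yields in particular $y^\beta-y_0\le y^\alpha-y_0$, hence $y^\beta\le y^\alpha$ and $y^\alpha-y^\beta\ge0$, which has the same sign as $y^\alpha-y_0$; the case $y^\alpha\le y_0$ is handled identically. I do not expect a serious obstacle here: the only delicate point is this last implication together with its degenerate instance $y^\alpha=y_0$, which by the same distance bound forces $y^\beta=y_0=y^\alpha$, so that the phrase ``same sign'' must be read in the weak sense that permits the value zero. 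This is the natural reading consistent with the algebra and suffices for the applications to partial and total pinning.
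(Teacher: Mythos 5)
Your proposal is correct and follows essentially the same route as the paper's proof: both test each minimizer against the other, sum the two inequalities so that $\psi$ cancels, deduce $|y^\alpha-y_0|\ge|y^\beta-y_0|$ from the sign of $\alpha-\beta$, and then convert this into the sign statement (the paper factors the difference of squares to get $0\le(y^\alpha-y^\beta)\bigl((y^\alpha-y_0)+(y^\beta-y_0)\bigr)$, whereas you do an equivalent case split on the sign of $y^\alpha-y_0$). Your remark that ``same sign'' must be read in the weak sense admitting zero is consistent with the paper's own argument.
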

\begin{proof}
By the minimality of $y^\alpha$ and $y^\beta$ we can write
\begin{align*}
\psi(y^\alpha)+\alpha(y^\alpha-y_0)^2 &\le \psi(y^\beta)+\alpha(y^\beta-y_0)^2 \\
\psi(y^\beta)+\beta(y^\beta-y_0)^2 &\le \psi(y^\alpha)+\beta(y^\alpha-y_0)^2
\end{align*}
and summing them up we get
$$(\alpha-\beta)(y^\alpha-y_0)^2\le(\alpha-\beta)(y_1^\beta-y_0)^2$$
which implies, on one hand that $|y^\alpha-y_0|\ge|y^\beta-y_0|$, on the other hand
$$0\le(y^\alpha-y^\beta)((y^\beta-y_0)+(y^\alpha-y_0)).$$
Since $(y^\beta-y_0)+(y^\alpha-y_0)$ has the sign of $y^\alpha-y_0$, we get the thesis.
\end{proof}
}

The monotonicity of unperturbed discrete solutions follows from  Lemma \ref{mp}, as shown in  \cite{ansbrazim}. In the perturbed case we have no monotonicity of the discrete solutions. Nevertheless, an important property remains; namely, the monotone behavior with respect to initial data. This is ensured by the following result  (see \cite{ansbrazim} Proposition 4.2), which still holds in the perturbed case with the same proof.

\begin{proposition}\label{monbeh}
Given two initial data $y_0,z_0\in\mathbb{R}$, and two positive constants $S\ge T>0$. Consider $\{y_n^\tau\}$ solutions of \eqref{rescaledp} starting from $y_0$ with $\phi_1(t)=W(t)+Tt$ and $\{z_n^\tau\}$ starting from $z_0$ with $\phi_1(z)=W(t)+St$. If $z_0\le y_0$ then $z_n^\tau\le y_n^\tau$ for all $n\ge1$ and $\tau>0$.
\end{proposition}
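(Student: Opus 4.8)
\emph{The plan} is to argue by induction on $n$, with the cross-comparison estimate of Lemma \ref{mp} as essentially the only tool. The base case $n=0$ is exactly the hypothesis $z_0\le y_0$. For the inductive step I assume $z_{n-1}^\tau\le y_{n-1}^\tau$, set $\beta:=a_n^\tau\gamma/2>0$, and apply Lemma \ref{mp} with $\psi_1(t)=W(t)+Tt$, $\psi_2(t)=W(t)+St$, $x_1=y_{n-1}^\tau$, $x_2=z_{n-1}^\tau$ and corresponding minimizers $y_1=y_n^\tau$, $y_2=z_n^\tau$, which are precisely the $n$-th iterates of the two schemes \eqref{rescaledp}.

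The point of this choice is that the common, non-convex part $W$ cancels on the left-hand side of the conclusion of Lemma \ref{mp}: a one-line computation gives
$$\psi_1(y_n^\tau)-\psi_1(z_n^\tau)+\psi_2(z_n^\tau)-\psi_2(y_n^\tau)=(T-S)(y_n^\tau-z_n^\tau),$$
so that, writing $d_m:=y_m^\tau-z_m^\tau$, the lemma becomes $(T-S)\,d_n\le 2\beta\, d_{n-1}\,d_n$, i.e.
$$0\le\big(2\beta\, d_{n-1}+(S-T)\big)\,d_n.$$
By the inductive hypothesis $d_{n-1}\ge0$, and $S-T\ge0$ by assumption, so the coefficient $C:=2\beta\, d_{n-1}+(S-T)$ is nonnegative. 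Whenever $C>0$ this immediately forces $d_n\ge0$, that is $z_n^\tau\le y_n^\tau$, which closes the induction.

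The main obstacle is the degenerate case $C=0$. Since $\beta>0$, $d_{n-1}\ge0$ and $S-T\ge0$, the vanishing of $C$ forces \emph{both} $S=T$ and $d_{n-1}=0$; but then the two minimization problems at step $n$ share the identical functional $W(t)+Tt+\beta(t-y_{n-1}^\tau)^2$ and the identical centre $y_{n-1}^\tau=z_{n-1}^\tau$, so $y_n^\tau$ and $z_n^\tau$ minimize exactly the same function and the ordering is recovered by taking the same minimizer. This is automatic whenever $a_n^\tau\gamma$ exceeds the Lipschitz constant of $W'$, in which case the functional is strictly convex and the minimizer is unique; in the remaining (small-$a_n^\tau\gamma$) regime it amounts to a consistent choice of selection among coinciding problems, so it is not a genuine analytic difficulty. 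One could alternatively sidestep the point by first establishing the statement in the strict case $S>T$, where $C\ge S-T>0$ at every step, and then passing to $S=T$ by running the $z$-scheme with parameter $S+\delta$ and letting $\delta\to0^{+}$, exploiting the stability of the ordering under the limit.
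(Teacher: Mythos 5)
Your argument is correct and is essentially the paper's own: the paper does not reprint a proof but asserts that \cite{ansbrazim} Proposition 4.2 ``still holds in the perturbed case with the same proof'', and that proof is exactly your cross-comparison via Lemma \ref{mp} with $\psi_1(t)=W(t)+Tt$, $\psi_2(t)=W(t)+St$, $x_1=y_{n-1}^\tau$, $x_2=z_{n-1}^\tau$ and $\beta=a_n^\tau\gamma/2$, after which $W$ cancels and the sign of $(2\beta d_{n-1}+(S-T))d_n$ gives the induction step. The degenerate case $S=T$, $y_{n-1}^\tau=z_{n-1}^\tau$ is a selection ambiguity already present in the unperturbed setting (your $\delta\to0^+$ workaround is shaky since minimizers need not converge to the chosen selection, but the consistent-selection resolution is the standard and sufficient one).
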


{
\begin{remark}\label{energylevel}
For $\{y_n^\tau\}$ solution of \eqref{rescaledp} we either have $y_n^\tau\le y_{n-1}^\tau$ or $
y_n^\tau\le y_{n-1}^\tau+1$, since $$\phi_1(t)+{a_n^\tau\over 2}\gamma(t-y_{n-1}^\tau)^2/2<\phi_1(t+1)+{a_n^\tau\over 2}\gamma(t+1-y_{n-1}^\tau)^2\hbox{ if }y_{n-1}^\tau\le t\le y_{n-1}^\tau+1.
$$

Moreover, $y_{n+1}\le y_{n-1}^\tau+1$. Indeed, otherwise $y_{n-1}^\tau+1<y_{n+1}^\tau\le y_{n-1}^\tau+2$, by the observation above. As \begin{equation}\label{elineq1}
\phi_1(y_{n+1}^\tau-1)<\phi_1(y_{n+1}^\tau)<\phi_1(y_n^\tau)<\phi_1(t)
\end{equation}
for any $y_{n-1}^\tau<t<y_n^\tau$ and therefore
\begin{equation}\label{elineq2}
y_n^\tau<y_{n+1}^\tau-1\le y_{n-1}^\tau<y_{n+1}^\tau,
\end{equation}
\eqref{elineq1} and \eqref{elineq2} lead to a contradiction. Reasoning by induction we get that $y_{n+k}^\tau\le y_{n-1}^\tau+1$ for any $k\ge0$, $n\ge1$. This means that, even if the motion is not (in general) monotone, once it reaches an energy well either it proceeds further decreasing the energy or it remains in that well.
\end{remark}
}

Consider the case of $N$-periodic perturbations; that is,
\begin{equation}\label{periodicpert}
a^\tau_n=a_i,\text{ if }n=kN+i\text{ for some }k\in\mathbb{N},
\end{equation}
for any $\tau$. The solutions of \eqref{rescaledp} with such perturbations do not depend on $\tau$.

\begin{proposition}\label{homvel}
Consider $\{a_n^\tau\}$ as in \eqref{periodicpert} and $\{y_n\}$ a solution of \eqref{rescaledp} with $\phi_1(t)=W(t)+Tt$. Then there exists the limit
$$f_\gamma(T,\{a_n\})=\lim_{n\to\infty} \frac{y_0-y_n}{n}\ge0$$
and it is independent of $y_0$. Moreover $T\mapsto f_\gamma\big(T,(a_n)\big)$ is an increasing map.
\end{proposition}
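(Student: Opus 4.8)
The plan is to recognize the one-period dynamics as the lift of an orientation-preserving circle map and to read off $f_\gamma$ as (minus) its Poincar\'e rotation number. Write $S_i$ for the single-step map $y\mapsto\argmin{t\in\mathbb{R}}\{\phi_1(t)+\frac{a_i}{2}\gamma(t-y)^2\}$, fixing the smallest minimizer when it is not unique, and set $\Phi:=S_N\circ\cdots\circ S_1$, the map advancing the trajectory by one full period of the perturbation, so that $y_{kN}=\Phi^k(y_0)$ by $N$-periodicity. Two structural facts drive everything. First, since $W$ is $1$-periodic and $\phi_1(t+1)=\phi_1(t)+T$, the substitution $t=s+1$ shows that adding the integer $1$ to the base point merely translates the whole minimization by $1$; hence each $S_i$, and therefore $\Phi$, is \emph{equivariant}: $\Phi(y+1)=\Phi(y)+1$. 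Second, Proposition \ref{monbeh} applied with $S=T$ shows each $S_i$ is order-preserving, so $\Phi$ is non-decreasing.

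First I would prove existence of the limit along multiples of $N$. Set $u_k:=\Phi^k(0)$ and let $m:=\lfloor\Phi^l(0)\rfloor$. Monotonicity gives $\Phi^k(m)\le\Phi^k(\Phi^l(0))\le\Phi^k(m+1)$, and equivariance turns this into $u_k+m\le u_{k+l}\le u_k+m+1$; since $u_l-1<m\le u_l$ this yields $|u_{k+l}-u_k-u_l|\le1$ for all $k,l$. A Fekete-type argument for such almost-additive sequences (the shifted sequences $u_k\pm1$ are respectively sub- and super-additive) shows that $\rho:=\lim_k u_k/k$ exists. The same two properties give $\Phi^k(0)+\lfloor y_0\rfloor\le\Phi^k(y_0)\le\Phi^k(0)+\lfloor y_0\rfloor+1$, hence $|\Phi^k(y_0)-\Phi^k(0)|\le|y_0|+1$, so $\lim_k\Phi^k(y_0)/k=\rho$ independently of $y_0$.

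Next I would upgrade the limit to arbitrary $n$ and fix its sign. Every interior minimizer satisfies $W'(y_n)+T+a_n\gamma(y_n-y_{n-1})=0$, so, using $\|W'\|_\infty=1$ and $\alpha:=\min_i a_i>0$, one gets the uniform bound $|y_n-y_{n-1}|\le(1+T)/(\alpha\gamma)$. Writing $n=kN+i$ with $0\le i<N$, the difference between $(y_0-y_n)/n$ and $(y_0-y_{kN})/(kN)$ is $O(1/k)$, so $f_\gamma:=\lim_n(y_0-y_n)/n=-\rho/N$ exists and is independent of $y_0$. For the sign, testing the minimality of $y_n$ against the competitor $y_{n-1}$ gives $\phi_1(y_n)\le\phi_1(y_{n-1})\le\phi_1(y_0)$; since $W$ is bounded this forces $y_n\le y_0+\mathrm{osc}(W)/T$, so $y_n$ is bounded above and $\liminf_n(y_0-y_n)/n\ge0$, whence $f_\gamma\ge0$.

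Finally, monotonicity in $T$ follows directly from Proposition \ref{monbeh}: for $S\ge T$ and the same initial datum $y_0$, the solution $\{z_n\}$ driven by $W(t)+St$ satisfies $z_n\le y_n$, hence $(y_0-z_n)/n\ge(y_0-y_n)/n$ and, in the limit, $f_\gamma(S,\{a_n\})\ge f_\gamma(T,\{a_n\})$. The main obstacle is the rotation-number step: one must ensure the order structure survives the possible non-uniqueness of minimizers (handled by the fixed smallest-minimizer selection, which remains equivariant and order-preserving) and that the almost-additivity gap is genuinely bounded by a constant independent of $k$ and $l$, which is exactly the $\pm1$ estimate produced above.
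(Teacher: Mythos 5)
Your proposal is correct and takes essentially the same route as the paper's proof: both arguments rest on the integer-translation equivariance coming from the $1$-periodicity of $W$ together with the order preservation supplied by Proposition \ref{monbeh} with $S=T$, which combine to give the $\pm1$ almost-additivity estimate on the orbit sampled at multiples of $N$, after which the limit follows by a subadditivity argument and the monotonicity in $T$ again from Proposition \ref{monbeh}. Your repackaging of the subadditive step as a rotation number via Fekete, the explicit Euler--Lagrange increment bound used to pass from multiples of $N$ to general $n$, and the energy-decrease argument for $f_\gamma\ge0$ (where the paper instead invokes Remark \ref{energylevel}) are presentational variants rather than a genuinely different proof.
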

\begin{proof}
First, notice that, by the periodicity of $W$, for any integer $l$ the solution of \eqref{rescaledp} from $y_0+l$ is $y_n^l=y_n+l$. Indeed
\begin{align*}
y_1^l &\in \argmin{t\in\mathbb{R}}\bigg\{W(t)+Tt+a_1^\tau\frac{\gamma}{2}\big(t-(y_0+l)\big)^2\bigg\} \\
&=\argmin{t\in\mathbb{R}}\bigg\{W(t-l)+T(t-l)-Tl+a_1^\tau\frac{\gamma}{2}\big((t-h)-y_0\big)^2\bigg\} \\
&= \argmin{s\in\mathbb{R}}\bigg\{W(s)+Ts+a_1^\tau\frac{\gamma}{2}\big(s-y_0\big)^2\bigg\}+l,
\end{align*}
and the claim follows by induction. We first consider the case $y_0=0$. For any $k\ge1$ let $h=\lfloor y_{kN}\rfloor$, so that $0\le y_{kN}-h< 1$. If $z_0=y_{kN}-h$ then we have that $z_i=y_{kN+i}-h$ by the $N$-periodicity of $\{a_n\}$. By Proposition \ref{monbeh} with $T=S$, since $0<z_0\le1$, we have that $y_i\le z_i\le y_i+1$, which reads as $y_i\le y_{kN+i}-h\le y_i+1$, and by the definition of $h$ we have
\begin{equation}\label{ineq}
	y_i+y_{kN}-1\le y_{kN+i}\le y_i+y_{kN}+1.
\end{equation}
For all $m<n$, denote $m'=\lfloor m/N\rfloor N$ and consider $k$ such that and $n=km'+i$, with $0\le i< m'$. Since $m'$ is a multiple of $N$, the second inequality in (\ref{ineq}) applies and we get
$$\frac{y_n}{n}\le\frac{y_{km'}+y_i+1}{km'+i}\le \frac{ky_{m'}+y_i+k}{km'+i}\le\frac{y_{m'}}{m'}+\frac{1}{m'}+\frac{y_i}{km'}.$$
Taking the limit as $n\to\infty$ and keeping $m$ fixed we have
$$\limsup_{n\to\infty}\frac{y_n}{n}\le\frac{y_{m'}+1}{m'}.$$
Now, consider $0\le j<N$ such that $m=\lfloor m/N\rfloor N+j=m'+j$. By the first inequality in \eqref{ineq} we have $y_m\ge y_{m'}+y_j-1$, so that $(y_{m'}+1)/m'\le(y_m+2-y_j)/m'$. Taking the limit as $m\to\infty$ we have that
$$\limsup_{n\to\infty}\frac{y_n}{n}\le\liminf_{m\to\infty}\frac{y_m}{m}$$
which yields the existence of the limit, which is non-negative from Remark \ref{energylevel}. 

Now, consider any $y_0\not=0$ and let $h=\lfloor y_0\rfloor$, so that $0\le y_0-h\le1$. We have
$$y_n^0-1 \le y_n-y_0 \le y_n^0+1,$$
where $\{y_n^0\}$ is the discrete solution of \eqref{rescaledp} starting from $0$. This implies that $f_\gamma\big(T,(a_n)\big)$ is independent of the initial data $y_0$. Moreover, let $\{z_n\}$ be the rescaled discrete solution for linear energy component $S\ge T$. By Proposition \ref{monbeh} we have $-z_n\ge -y_n$, which implies the monotonicity of $f_\gamma$ in $T$.
\end{proof}

If $a_n\equiv c$ then, following the notation in \cite{ansbrazim}, we have $f_\gamma(T,\{a_n\})=f_{c\gamma}(T)$.

\begin{theorem}
Let $\{a_n\}$ be as in \eqref{periodicpert}; then for all $\gamma\in(0,+\infty)$, and for any initial data $u_0\in\mathbb{R}$, the $\{a^\tau\}$-perturbed minimizing movement along $\{\phi_{\gamma\tau}\}$ is
$$u^\gamma(t)=u_0-\gamma f_\gamma\big(T,(a_n)\big)t\quad\hbox{ for all }t\ge0.$$
\end{theorem}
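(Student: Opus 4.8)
The plan is to pass from the rescaled discrete solutions, whose homogenized velocity is controlled by Proposition \ref{homvel}, to the original discrete solutions and then let $\tau\to0$. Recall that in the critical regime $\e=\gamma\tau$ one has the exact identification $u_n^{\tau,\e}=\e\,y_n$, where $\{y_n\}$ solves the rescaled scheme \eqref{rescaledp} with starting point $y_0=u_0^\e/\e$; moreover, for $N$-periodic perturbations the recursion defining $\{y_n\}$ does not depend on $\tau$, so that the whole $\tau$-dependence is carried by the initial point $y_0=y_0(\tau)$. Writing $n=n(\tau)=\lceil t/\tau\rceil$ and using $\e=\gamma\tau$, I would rearrange
$$u^{\tau,\e}(t)=\gamma\tau\,y_{n}=u_0^\e-\gamma\,(\tau n)\,\frac{y_0-y_{n}}{n},$$
so that the statement reduces to identifying the three limits $u_0^\e\to u_0$, $\tau n(\tau)\to t$, and $\frac{y_0-y_n}{n}\to f_\gamma(T,(a_n))$ as $\tau\to0$.

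The first two limits are immediate: convergence of the initial data is the standing hypothesis $u_0^\e\to u_0$, and $\tau\lceil t/\tau\rceil\to t$ by squeezing. The content lies in the third limit, and here is the difficulty: Proposition \ref{homvel} yields $\frac{y_0-y_n}{n}\to f_\gamma$ only for a \emph{fixed} starting point, whereas in our setting $y_0=u_0^\e/\e=u_0^{\gamma\tau}/(\gamma\tau)$ diverges as $\tau\to0$. We are therefore forced to take $n\to\infty$ and $y_0\to\infty$ \emph{simultaneously}, which is exactly the regime the fixed-datum statement of Proposition \ref{homvel} does not cover.

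To overcome this I would upgrade the independence from $y_0$ to a \emph{quantitative and uniform} estimate. Let $\{y_n^{(0)}\}$ be the solution started from $0$, and for arbitrary $y_0$ set $h=\lfloor y_0\rfloor$ and $\tilde y_0=y_0-h\in[0,1)$. Combining the integer-translation covariance $y_n^{(y_0)}=y_n^{(\tilde y_0)}+h$ (the shift argument used in the proof of Proposition \ref{homvel}) with the monotone dependence on initial data of Proposition \ref{monbeh} applied with $S=T$ — which sandwiches $y_n^{(0)}\le y_n^{(\tilde y_0)}\le y_n^{(0)}+1$ — I would obtain
$$\big|(y_n^{(y_0)}-y_0)-y_n^{(0)}\big|\le1\qquad\text{for all }n,\ \text{uniformly in }y_0.$$
Dividing by $n$ gives $\big|\frac{y_0-y_n^{(y_0)}}{n}-\frac{-y_n^{(0)}}{n}\big|\le\frac1n$, so the moving starting point may be replaced by $0$ at the cost of a vanishing error. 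Since $\frac{-y_n^{(0)}}{n}=\frac{y_0^{(0)}-y_n^{(0)}}{n}\to f_\gamma(T,(a_n))$ by Proposition \ref{homvel} and $n(\tau)\to\infty$, the third limit follows.

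Combining the three limits yields $u^{\tau,\e}(t)\to u_0-\gamma f_\gamma(T,(a_n))\,t$ pointwise in $t$, for \emph{every} sequence $\tau\to0$ with $\e=\gamma\tau$; this is precisely the asserted perturbed minimizing movement, and because the full limit (not merely a subsequential one) exists, it is unique. The one genuine obstacle is the uniform-in-$y_0$ estimate above: the remaining steps are bookkeeping, but without decoupling the divergence of $y_0$ from the growth of $n$ one cannot legitimately feed the $\tau$-dependent initial datum into the homogenization formula of Proposition \ref{homvel}.
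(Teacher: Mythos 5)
Your argument is correct and follows essentially the same route as the paper: rescale via $u_n^{\tau,\e}=\e y_n$, write the difference quotient, and combine Proposition \ref{homvel} with $\tau\lceil t/\tau\rceil\to t$ and $u_0^\e\to u_0$. The uniform-in-$y_0$ bound $|(y_n^{(y_0)}-y_0)-y_n^{(0)}|\le 1$ that you single out as the one genuine obstacle is precisely the estimate already established inside the paper's proof of Proposition \ref{homvel} (integer-translation covariance plus Proposition \ref{monbeh}), so your write-up simply makes explicit a uniformity that the paper's very terse proof uses tacitly when the initial datum $y_0=u_0^\e/\e$ varies with $\tau$.
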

\begin{proof}
For every $t>0$, the pointwise convergence of discrete solutions implies
$$\frac{u^\gamma(t)-u_0}{t} = \lim_k \frac{u^{\tau_k,\varepsilon_k}(t)-u_0^{\varepsilon_k}}{t}.$$
Consider $n=\lceil t/\tau_k\rceil$, which depends on $t$ and $k$. For every $t>0$, we have $n\to\infty$ when $k\to\infty$.  We then obtain
$$\frac{u^\gamma(t)-u_0}{t} = \gamma \lim_k \frac{y_n-y_0}{n}=-\gamma f_\gamma\big(T,(a_n)\big)$$
by multiplying and dividing by $\varepsilon_k$.
\end{proof}

\subsection{The pinning threshold}

We extend the definition of the pinning threshold given in \cite{ansbrazim} (see Definition 5.2) to the perturbed case, still working with periodic perturbations as in \eqref{periodicpert}.

\begin{definition}
For any $\gamma>0$, the {\em pinning threshold} with perturbations $\{a_n\}$ at regime $\gamma$ is
$$T_\gamma(\{a_n\}):=\sup\{T>0\,|\,f_\gamma(T,\{a_n\})=0\}.$$
\end{definition}

Notice that, by the monotonicity of $T\mapsto f_\gamma(T,\{a_n\})$, then $f_\gamma(T,\{a_n\})=0$ for every $T<T_\gamma(\{a_n\})$.

If $a_n\equiv c$, following the notation in \cite{ansbrazim}, we have $T_\gamma(\{a_n\})=T_{c\gamma}$.

{
\begin{proposition}
Given $\{a_n\}$ as in \eqref{periodicpert}, and set $\alpha=\min_{1\le i\le N}a_i$. Then
$$T_\gamma(\{a_n\})=T_{\alpha\gamma}.$$
\end{proposition}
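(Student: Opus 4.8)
The plan is to reduce everything to the one-step maps of the rescaled scheme \eqref{rescaledp} and to identify pinning with the vanishing of a rotation number. Under the no-bifurcation condition, for each coefficient value $c>0$ the map
$$g_c(y):=\argmin{t\in\mathbb{R}}\Big\{W(t)+Tt+\tfrac{c}{2}\gamma(t-y)^2\Big\}$$
is single valued and continuous, so that the periodic orbit $\{y_n\}$ of \eqref{rescaledp} is generated by the full-period map $G:=g_{a_N}\circ\cdots\circ g_{a_1}$, while the constant-$\alpha$ orbit $\{w_n\}$ is generated by $g_\alpha$. I would record three structural facts: (P1) each $g_c$ is non-decreasing in $y$ (Lemma \ref{mp} with $\psi_1=\psi_2$); (P2) $g_c(y+1)=g_c(y)+1$ by the $1$-periodicity of $W$ (as in the proof of Proposition \ref{homvel}), so each $g_c$, and hence $G$, is a monotone degree-one map possessing a rotation number, and pinning is exactly the vanishing of that number, i.e.\ the existence of a fixed point; (P3) the sandwich of Lemma \ref{pinninglem}: for $\alpha\le c$ and every $y$, the increment $g_c(y)-y$ has the same sign as $g_\alpha(y)-y$ and no larger modulus, so $g_c(y)$ always lies between $g_\alpha(y)$ and $y$. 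Since $f_\gamma\ge0$ by Proposition \ref{homvel}, the rotation numbers are nonpositive, and $f_\gamma(T,\{a_n\})=0$ (resp.\ $f_{\alpha\gamma}(T)=0$) is equivalent to $G$ (resp.\ $g_\alpha$) having a fixed point.

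For the inequality $T_{\alpha\gamma}\le T_\gamma(\{a_n\})$ I would argue that pinning of the constant-$\alpha$ motion forces pinning of the periodic one. Starting both orbits from the same $y_0$, I combine (P3) and (P1) inductively: $y_n=g_{a_n}(y_{n-1})\ge g_\alpha(y_{n-1})\ge g_\alpha(w_{n-1})=w_n$, so $y_n\ge w_n$ for all $n$. If $T<T_{\alpha\gamma}$ then $g_\alpha$ has a fixed point; being monotone, its orbit $\{w_n\}$ is monotone and trapped between consecutive fixed points, hence bounded below. Then $\{y_n\}$ is bounded below as well, so $(y_0-y_n)/n\to0$ and $f_\gamma(T,\{a_n\})=0$. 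Thus $T_\gamma(\{a_n\})\ge T_{\alpha\gamma}$.

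The opposite inequality $T_\gamma(\{a_n\})\le T_{\alpha\gamma}$ is the delicate one, and it is where I expect the main obstacle. Fix $T>T_{\alpha\gamma}$, so $g_\alpha$ has no fixed point. The naive worry is that at the non-$\alpha$ steps (stronger springs, $c>\alpha$) the motion might move \emph{upward} and cancel the progress made at the $\alpha$-steps; this is indeed possible for an isolated step when $T<1$. The resolution is that, $g_\alpha$ being a continuous monotone degree-one map with nonpositive rotation number and no fixed point, the periodic function $g_\alpha(y)-y$ never vanishes and is therefore strictly negative, so by compactness $\sup_y\big(g_\alpha(y)-y\big)=-\delta_\alpha<0$. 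The sandwich (P3) now propagates this: since $g_\alpha(y)<y$, every coefficient gives $g_c(y)\in[g_\alpha(y),y]$, hence $g_{a_i}(y)\le y$ for all $i$ and all $y$, and no step ever moves upward. Composing over one period, the (at least one) $\alpha$-step decreases the value by at least $\delta_\alpha$ while all other steps are non-increasing, so $G(y)\le y-\delta_\alpha$ for every $y$.

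Iterating the last bound, $y_{kN}\le y_0-k\delta_\alpha$, whence $f_\gamma(T,\{a_n\})\ge\delta_\alpha/N>0$; thus $T>T_\gamma(\{a_n\})$ is impossible and $T_\gamma(\{a_n\})\le T_{\alpha\gamma}$. Combining the two inequalities yields $T_\gamma(\{a_n\})=T_{\alpha\gamma}$. The only point requiring care beyond this outline is the regularity (continuity and single-valuedness) of $g_c$, which holds away from the bifurcation set and suffices, since the threshold is approached through admissible values of $\gamma$; the key conceptual step remains the observation that unpinning of the $\alpha$-dynamics eliminates fixed points and, through Lemma \ref{pinninglem}, rules out upward steps for \emph{all} coefficients at once, giving a uniform per-period descent.
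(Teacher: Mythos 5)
Your overall architecture is sound and, for the key inequality $T_\gamma(\{a_n\})\le T_{\alpha\gamma}$, genuinely different from the paper's. The paper does not argue through a uniform per-period descent of the composed map $G$: it uses Remark \ref{energylevel} (the orbit never rises more than $1$ above any earlier value) to compare the perturbed orbit sampled at the times $kN+1$ with the constant-$\alpha$ orbit at times $k+1$, obtaining $y_{kN+1}\le y^\alpha_{k+1}+1$ and hence $f_\gamma(T,\{a_n\})\ge f_{\alpha\gamma}(T)/N>0$. Your rotation-number viewpoint instead yields the quantitative bound $f_\gamma\ge\delta_\alpha/N$ and isolates the mechanism (unpinning of the $\alpha$-dynamics rules out upward steps for every coefficient at once) very cleanly. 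For the easy inequality your comparison is essentially the paper's, except for a sign slip: $g_{a_n}(y_{n-1})\ge g_\alpha(y_{n-1})$ follows from Lemma \ref{pinninglem} only when $g_\alpha(y_{n-1})\le y_{n-1}$, which can fail pointwise when $T<1$; the paper fixes the sign via the monotonicity in $n$ of the constant-coefficient orbit (treating the two cases separately), while in your setting it is cleaner to trap the orbit directly above an integer translate of a fixed point of $g_\alpha$, since both $y_{n-1}$ and $g_\alpha(y_{n-1})$, hence everything between them, stay above it.

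There is, however, a genuine gap in the step $\sup_y\big(g_\alpha(y)-y\big)=-\delta_\alpha<0$. This requires $g_\alpha$ to be a continuous single-valued map, but the argmin defining $g_c$ is multivalued at the bifurcation values of the \emph{state} $y$ (where two wells of $t\mapsto W(t)+Tt+\tfrac{c\gamma}{2}(t-y)^2$ tie), and your proposed remedy --- approaching the threshold through admissible values of $\gamma$ --- does not apply: for fixed $\gamma$ and $T$ the discontinuities live in $y$, and your supremum runs over all $y$, not only over the points visited by one orbit. Without continuity, ``no fixed point'' does not exclude a jump of the graph across the diagonal, i.e.\ a $y^*$ whose argmin contains points on both sides of $y^*$; this is a real possibility for $T<1$, a range the threshold does reach in the critical regimes. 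The gap is repairable: the argmin correspondence has a closed graph, is monotone by Lemma \ref{mp}, and commutes with integer translations, and $f_{\alpha\gamma}(T)>0$ excludes any minimizer $\ge y$ (such a selection would generate, by the monotonicity of constant-coefficient orbits, a non-decreasing orbit with $f=0$, contradicting the independence of $f$ from the initial datum in Proposition \ref{homvel}); closedness plus $1$-periodicity of the graph then give the uniform gap $\delta_\alpha$. As written, though, the key estimate is unsupported and you should supply this argument.
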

\begin{proof}
Consider $T<T_{\alpha\gamma}$ and denote by $\{y_n^\alpha\}$ a solution of \eqref{rescaledp} starting from $y_0$ corresponding to $a_n\equiv\alpha$. We have that $(y_0-y_n^\alpha)/n\to0$ by the definition of pinning threshold. As already noted, the unperturbed discrete solutions are monotone, and so is $y_n^\alpha$. Assume that $y_1^\alpha\le y_0$. By Lemma \ref{pinninglem}, $y_1^\alpha\le y_1$. Consider
$$z\in\argmin{t\in\mathbb{R}}\left\{\phi_1(t)+a_2\frac{\gamma}{2}(t-y_1^\alpha)^2\right\}.$$
On the one hand, from Proposition \ref{monbeh} we get that $z\le y_2$; on the other hand, by applying Lemma \ref{pinninglem}
again we obtain  $y_2^\alpha\le z$, and by induction we get $y_n^\alpha\le y_n$. Hence, $0\le(y_0-y_n)/n\le(y_0-y_n^\alpha)/n\to 0$; that is, $f_\gamma(T,\{a_n\})=0$. Analogously, if $y_1^\alpha\ge y_0$. This yields that $T_{\alpha\gamma}\le T_\gamma(\{a_n\})$.

Now take $T>T_{\alpha\gamma}$. Since $f_{\alpha\gamma}(T)>0$, $\{y_n^\alpha\}$ is decreasing. It is not restrictive to suppose that $a_1=\alpha$. From Remark \ref{energylevel} we infer that $y_N\le y_1^\alpha+1$, which yields that $y_{N+1}\le y_2^\alpha+1$. Analogously, $y_{2N}\le y_2^\alpha+1$, and therefore $y_{2N+1}\le y_3^\alpha+1$. Hence, we have that $y_{kN+1}\le y_{k+1}^\alpha+1$, which implies that $(y_0-y_{k+1}^\alpha)/(k+1)\le(y_0-y_{kN+1}+1)/(kN+1)$. By taking the limit as $k\to\infty$ we have that $0<f_{\alpha\gamma}(T)\le Nf_\gamma(T,\{a_n\})$, in particular $T>T_\gamma(\{a_n\})$ and the thesis follows.
\end{proof}
}

\section{Perturbed motion of discrete interfaces}

Since the interest for studying the perturbation method defined in Section \ref{pertMM} relies on the competition between an energy and a dissipation term, it can also be applied to the well-know scheme of geometric minimizing movements introduced in the pioneering work \cite{almtaywan}.

In this section, we analyze the effect of the perturbations on the motion of two-dimensional discrete interfaces studied in \cite{bragelnov}.

\subsection{Setting of the problem}

We briefly recall the setting of the problem; for any $\e>0$ we consider the lattice $\e\mathbb{Z}^2$. For any set of indices $\mathcal{I}\subset\varepsilon\mathbb{Z}^2$ we define
$$P_\e(\mathcal{I})=\e\#\{(i,j)\,|\,i\in\mathcal{I},\,j\notin\mathcal{I},\,|i-j|=\e\}.$$
We generalize these functionals to any set $E\subset\mathbb{R}^2$ which is union of squares centered at a point of the lattice $\e\mathbb{Z}^2$ with side length $\e$, that is
\begin{equation}\label{latticeset}
E=\bigcup_{i\in\mathcal{I}}Q_\e(i),
\end{equation}
where $Q_\e(i)=[i_1-\e/2,i_1+\e/2]\times[i_2-\e/2,i_2+\e/2]$, and denote with ${\mathcal  D}_\varepsilon$ the family of such sets. With a slight abuse of notation we write $P_\e(E)=P_\e(\mathcal{I})$ for any set as in \eqref{latticeset}.

If we consider the family of sets of finite perimeter $S=\{E\subset\mathbb{R}^2\,\vert\,\mathcal{H}^1(\partial^*E)<\infty\}$, where $\partial^*E$ denotes the reduced boundary of $E$, endowed with the Hausdorff distance, such functionals correspond to
$$P_\varepsilon(E)=\begin{cases}
\mathcal{H}^1(\partial E) & E\in {\mathcal  D}_\varepsilon\\
+\infty&\text{otherwise,}
\end{cases}$$
defined in $S$, with domain ${\mathcal  D}(P_\e)={\mathcal  D}_\e$. As already noted in \cite{bragelnov} and proved for instance in \cite{alibracic}, 
the functionals $P_\e$ approximate (in the sense of $\Gamma$-convergence) the crystalline perimeter
$$P(E)=\int_{\partial^*E}\Vert \nu\Vert_1 d\mathcal{H}^1.$$

\begin{remark}
These functionals could also be seen as interface energies on spin systems. Given $u:\mathbb{Z}^2\to\{\pm1\}$ one defines the interaction energies
$$E_\e(u)=\frac{\e}{4}\sum_{\substack{i,j\in\mathcal{I}\\ |i-j|=1}}|u_i-u_j|^2$$
and $E_\e(u)=P_\e(\{i\in\e\mathbb{Z}^2\,|\,u(i/\e)=1\})$.
\end{remark}

The dissipations $D_\e(F,E)$ are defined as follows (according to the notation in \cite{bragelnov}). For any $\mathcal{I}\subset\e\mathbb{Z}^2$ and $E$ as in \eqref{latticeset}, we define the {\em discrete distance from $\partial E$} 
of $x\in Q_\e(i)$ for some $i\in\e\mathbb{Z}^2$ as
$$
d_\infty^\e(x,\partial E)=\begin{cases}{\e\over2}+
\text{dist}_\infty(i,\mathcal{I})&i\notin\mathcal{I} \\{\e\over2}+
\text{dist}_\infty(i,\e\mathbb{Z}^2\backslash\mathcal{I})&i\in\mathcal{I}
\end{cases}
$$
(the term $\e/2$ comes from the fact that the minimal distance of a point in $\e\mathbb{Z}^2$ from $\partial E$ is  $\e/2$, so that
in this way $d_\infty^\e(x,\partial E)\in\e\mathbb{Z}$).
Then, for any $E,F\in {\mathcal  D}_\varepsilon$
$$D_\varepsilon(F,E)=\int_{E\triangle F}d_\infty^\e(x,\partial E)dx,$$
where $E\triangle F$ denotes the symmetric difference between $E$ and $F$.

Finally, we can set the following minimization scheme
\begin{equation}\label{discretemot}
\begin{cases}
E_0^{\tau,\e}\in S \\ \displaystyle
E_n^{\tau,\e}\in\argmin{E\in S}\Big\{P_\e(E)+{a_n^\tau\over\tau} D_\e(E,E_{n-1}^{\tau,\e})\Big\},
\end{cases}
\end{equation}
and again denote as $E^{\tau,\e}(t)=E^{\tau,\e}_{\lceil t/\tau\rceil}$ its discrete solutions. Any limit in the Hausdorff metric of $\{E^{\tau,\e}\}$ is a \emph{$\{a^\tau\}$-perturbed geometric minimizing movement at regime $\tau$-$\e$}.

In the following, we study the perturbed scheme \eqref{discretemot} in the special case in which $E_0^{\tau,\e}=E_0^\e$ are coordinate rectangles converging in the Hausdorff metric as $\e\to0$, because this case already provides an interesting example of the effects of the perturbation. We also consider $\{a^\tau\}$ satisfying assumption 5 of Remark \ref{epmm}.

We focus our analysis on the regimes $\e=\gamma\tau$, which has been proved to be the critical one for the un-perturbed case.
The fast-converging cases are obtained as limit cases of the critical ones, cf.~Remark \ref{relica}.

\subsection{Motions of coordinate rectangles}

In \cite{bragelnov} Theorem 1 it has been proved that motions $\{E^{\tau,\e}\}$ starting from a coordinate rectangle $E^\e$ keep the rectangular shape. We confine the analysis to the evolution of the lengths $L_{1,n}^{\tau,\varepsilon}$, $L_{2,n}^{\tau,\varepsilon}$ of the sides of $E^{\tau,\e}_n$.

In the proof of Theorem 1 \cite{bragelnov} it is proved that, if $E$ is a coordinate rectangle in ${\mathcal  D}_\e$ of sides of length $L_1, L_2$, the minimizer
$$E'\in\argmin{F\in {\mathcal  D}_\e}\{P_\e(F)+\eta D_\e(F,E)/\e\}$$
is a coordinate rectangle centered in the center of $E$ and with sides of length $L_1', L_2'$ which satisfies 
$$\frac{L'_1-L_1}{\e}=-2\left\lfloor\frac{2}{\eta L_2}\right\rfloor+O(\e^2),\quad \frac{L'_2-L_2}{\e}=-2\left\lfloor\frac{2}{\eta L_1}\right\rfloor+O(\e^2),$$
except when $2/\eta L_1$ or $2/\eta L_1$ is in a neighborhood of an integer of amplitude which is infinitesimal with respect to $\e$. {
In order to simplify the exposition, we omit these cases, as their treatment does not 
vary from that of the corresponding cases in \cite{bragelnov}.} By taking $\eta=\gamma a_n^\tau$, and $E=E^{\tau,\e}_{n-1}$, for any $n\ge 1$ we have
\begin{equation}\label{increments}
\begin{aligned}
\frac{L_{1,n}^{\tau,\varepsilon}-L_{1,n-1}^{\tau,\varepsilon}}{\tau} &=-2\gamma\left\lfloor\frac{2}{\gamma a_n^\tau L_{2,n-1}^{\tau,\varepsilon}}\right\rfloor+O(\e^2) \\
\frac{L_{2,n}^{\tau,\varepsilon}-L_{2,n-1}^{\tau,\varepsilon}}{\tau} &=-2\gamma\left\lfloor\frac{2}{\gamma a_n^\tau L_{1,n-1}^{\tau,\varepsilon}}\right\rfloor+O(\e^2).
\end{aligned}
\end{equation}
Note that, as in Remark \ref{epmm}, equation \eqref{regularity} reads
$$D_\e(E^{\tau,\e}(t),E^{\tau,\e}(s))\le c\,\theta_T(t+\tau,s)$$
for any $t,s\in[0,T]$. This means that (up to subsequences) $L_1^{\tau,\e}, L_2^{\tau,\e}$ converge pointwise to some absolutely continuous functions $L_1, L_2$, as $\tau\to0$. This implies the pointwise convergence, in the Hausdorff metric, of the discrete solutions; i.e., $d_{\mathcal{H}}(E^{\tau_k,\e_k}(t),E(t))\to 0$ for some infinitesimal $\{\tau_k\}, \{\e_k\}$, to the coordinate rectangle $E(t)$ with sides of length $L_1(t), L_2(t)$. In particular $E(t)$ satisfies the following system of coupled ordinary differential equations
\begin{equation}\label{syst}
\begin{cases}
L'_{1}(t)=-2\gamma v_{1}(t) \\
L'_{2}(t)=-2\gamma v_{2}(t),
\end{cases}
\end{equation}
where $v_1$ and $v_2$ are, respectively, the weak limit in $L^1(0,T)$ of $\lfloor 2/(\gamma a^\tau(t)L_2^{\tau,\e}(t))\rfloor$ and $\lfloor 2/(\gamma a^\tau(t)L_1^{\tau,\e}(t))\rfloor$.

\subsection{The case of periodic perturbations} We can find the explicit form of $v_1$ and $v_2$, for instance in the case of $N$-periodic perturbations $\{a_n\}$ as in \eqref{periodicpert}. For any $1\le i\le N$ consider the functions
$$\chi^\tau_i(s):=\begin{cases}
1 & s\in((kN+i-1)\tau,(kN+i)\tau],\, k\in\mathbb{N} \\
0 & \text{otherwise,}
\end{cases}$$
which weakly converge to $1/N$ in $L^1_{\rm loc}$. We can rewrite the first of \eqref{increments} as
\begin{align*}
L^{\tau,\varepsilon}_{1,n} &= L^{\tau,\varepsilon}_{1,n-1}-2\gamma\tau \left\lfloor\frac{2}{\gamma a_n}\frac{1}{L^{\tau,\varepsilon}_{2,n}}\right\rfloor = L^{\tau,\varepsilon}_{1,0}-2\gamma\tau\sum_{k=0}^n \left\lfloor\frac{2}{\gamma a_k}\frac{1}{L^{\tau,\varepsilon}_{2,k}}\right\rfloor \\
&= L^{\tau,\varepsilon}_{1,0} - 2\gamma\tau \sum_{i=1}^N \sum_{k=1}^{n/N} \left\lfloor\frac{2}{\gamma a_i}\frac{1}{L^{\tau,\varepsilon}_{2,kN+i}}\right\rfloor \\
&= L^{\tau,\varepsilon}_{1,0}-2\gamma \sum_{i=1}^N \int_0^{n\tau} \left\lfloor\frac{2}{\gamma a_i}\frac{1}{L^{\tau,\varepsilon}_{2}(s)}\right\rfloor\chi^\tau_i(s)ds\,,
\end{align*}
and, taking the limit as $\tau\to0$, by the weak convergence of $\chi^\tau_i$ and the local uniform convergence of $L^{\tau,\varepsilon}_{2}$, we obtain that $v_1=\sum_{i=1}^N \lfloor 2/(\gamma a^\tau_i L_2)\rfloor/N$. We argue analogously for $v_2$; hence, \eqref{syst} reads as
\begin{equation}\label{systper}
\begin{cases}\displaystyle
L'_{1}(t)=-2\gamma{1\over N}\sum_{i=1}^N \Biggl\lfloor {2\over\gamma a^\tau_i L_2(t)}\Biggr\rfloor
\\  \\ \displaystyle
L'_{2}(t)=-2\gamma{1\over N}\sum_{i=1}^N \Biggl\lfloor {2\over\gamma a^\tau_i L_1(t)}\Biggr\rfloor.
\end{cases}
\end{equation}

Note that in the perturbed case the {\em pinning condition} changes; indeed, defining $\alpha=\min_{1\le i\le N}a_i$, if
$$L_{1,0}>\frac{2}{\gamma\alpha},$$
then $L'_2(t)=0$ for $t>0$ small enough. The same holds for $L_1$. As in \cite{bragelnov} Theorem 2 the motion is characterized in three cases:
\begin{itemize}
\item[(i)] \emph{total pinning}; i.e., $E(t)\equiv E_0$, if $L_{1,0}, L_{2,0}>2/(\gamma\alpha)$;
\item[(ii)] if $L_{1,0}\le2/(\gamma\alpha)$ and $L_{2,0}<2/(\gamma\alpha)$, then $E(t)$ shrinks up to an extinction time;
\item[(iii)] \emph{partial pinning} if $L_{1,0}>2/(\gamma\alpha)$ and $L_{2,0}<2/(\gamma\alpha)$ with $2/(\gamma a_i L_{2,0})\not\in\mathbb{N}$ for any $i$, in which only one couple of sides moves up to a time $t_0$ such that $L_{1,0}(t_0)\le2/(\gamma\alpha)$, then $E(t)$ follows the motion described in the previous case.
\end{itemize}

\begin{remark}[Limit cases]\label{relica}
As noted above, in the un-perturbed case, when $\tau(\e)=o(\e)$ and $\e(\tau)=o(\tau)$ we have fast convergences. In the general case of perturbed motion of discrete interfaces, the situation is slightly more complicated and can be treated separately. Nevertheless, in the periodic case we can make some interesting observations.

When $\tau(\e)=o(\e)$ we have total pinning of the motion, indeed from \eqref{increments}
$$L_{i,1}^{\tau(\e),\e}=L_{0,1}^\e-2\e\left\lfloor\frac{2\tau}{\e a_i L_{2,0}^\e}\right\rfloor=L_{0,1}^\e$$
and the same holds for $L_2^\e$, for $\e$ sufficiently small.

The case $\e(\tau)=o(\tau)$, also for non-periodic perturbations, reduces to the study of the $\Gamma$-limit of the functionals $P_\e/a^{\tau(\e)}$. We do not provide a limit result in this work, but one can observe that taking the limit in \eqref{systper} as $\gamma\to0$ we get
$$\begin{cases}\displaystyle
L_1'(t)=-{2\over a^*L_2(t)} \\ \displaystyle
L_2'(t)=-{2\over a^*L_1(t)},
\end{cases}$$
where $a^*$ is the harmonic mean of $\{a_n\}$. This can be regarded as the \emph{flat flow} perturbed by $a^*$, with respect to the crystalline perimeter $P$ defined above (see \cite{almtay} for the study of the flat flow  in the un-perturbed case;
see also \cite{bra2} Section 9.4).
\end{remark}

\section*{Acknowledgments} The authors acknowledge the MIUR Excellence Department Project awarded to the Department of Mathematics, University of Rome Tor Vergata, CUP E83C18000100006.


\end{document}